\newtheorem{thm}{Theorem}[section]
\newtheorem{conj}[thm]{Conjecture}
\newtheorem{cor}[thm]{Corollary}
\newtheorem{prop}[thm]{Proposition}
\newtheorem{lem}[thm]{Lemma}
\theoremstyle{definition}
\newtheorem{dfn}[thm]{Definition}
\newtheorem{notation}[thm]{Notation}
\newtheorem{dfns}[thm]{Definitions}
\theoremstyle{remark}
\newtheorem{rmk}[thm]{Remark}
\def\vol[#1]{\mbox{\rm Vol}(#1)}
\def\lv[#1]{\mbox{\rm LinkVol}(#1)}
\def\lvd[#1#2]{\mbox{\rm LinkVol}_{#1}(#2)}
\def\lva[#1]{\mbox{\rm LinkVol}_{{X},d}(#1)}
\def\lvsd[#1#2]{\mbox{\rm LinkVol}_{s,{#1}}(#2)}
\def\kv[#1]{\mbox{\rm KnotVol}(#1)}
\def\kvd[#1#2]{\mbox{\rm KnotVol}_{#1}(#2)}
\def\kvs[#1]{\mbox{\rm KnotVol}_{s}(#1)}
\def\kvsd[#1#2]{\mbox{\rm KnotVol}_{s,{#1}}(#2)}
\def\wt{\widetilde}
\def\depth[#1]{\mbox{\rm depth}(#1)}
\title{The Link Volume of 3-Manifolds}
\date{\today}  
\address{Department of mathematical Sciences, University of Arkansas, 
Fayetteville, AR 72701} %
\address{Department of Information and Computer Sciences, Nara Women's
  University  Kitauoya Nishimachi, Nara 630-8506, Japan} %
\email{yoav@uark.edu} %
\email{yamasita@ics.nara-wu.ac.jp} % 
\author{Yo'av Rieck} % 
\author{Yasushi Yamashita} %
\subjclass[2000]{05C10}
\begin{document}
\begin{abstract}
We view closed orientable 3-manifolds as covers of $S^{3}$
branched over hyperbolic links.  For a cover $M \stackrel{p}{\to} S^{3}$,
of degree $p$ and branched over a hyperbolic link $L \subset S^{3}$,
we assign the complexity $p \vol[S^{3} \setminus L]$.  We define an invariant of 3-manifolds,
called the {\it link volume} and denoted $\lv[M]$, that assigns to a 3-manifold $M$ the 
infimum of the complexities
of all possible covers $M \to S^{3}$, where the only constraint is that
the branch set is a hyperbolic link. 
Thus the link volume measures how efficiently $M$ can be represented as a 
cover of $S^3$.  

We study the basic properties of the link volume and related invariants,
in particular observing that for any hyperbolic manifold $M$, $\vol[M] < \lv[M]$.
We prove a structure theorem (Theorem~\ref{thm:jt}) that is similar to (and
uses) the celebrated theorem of J\o rgensen and Thurston.  This leads us to
conjecture that,
generically, the link volume of a hyperbolic 3-manifold 
is much bigger than its  volume (for precise statements
see Conjectures \ref{conj:LV>>V1} and \ref{conj:LV>>V2}).

Finally we prove that the link volumes of the manifolds obtained by Dehn filling a manifold with boundary
tori are linearly bounded above in terms of the length of the continued fraction expansion of the filling
curves (for a precise statement, see Theorem \ref{thm:dehn}).
\end{abstract}

\nocite{*}

\maketitle

\section{Introduction}
\label{sec:intro}

The study of 3-manifolds as branched covers of $S^{3}$ has a long history.
In 1920 Alexander~\cite{alex} gave a very simple argument showing
that every closed orientable triangulated 3-manifold is a 
cover of $S^{3}$ branched along the 1-skeleton of a tetrahedron
embedded in $S^{3}$.  
We explain his construction and give basic definitions in 
Section~\ref{sec:background}.  Clearly, if a 3-manifold $M$ is a
finite sheeted branched cover of $S^{3}$, then $M$ is closed and orientable.
Moise~\cite{moise} showed that every closed 3-manifold admits a triangulation;
thus we see: a 3-manifold $M$ is
closed and orientable if and only if $M$ is a finite sheeted branched cover of $S^{3}$.
From this point on, by {\it manifold} we mean connected closed orientable 3-manifold.

Alexander himself noticed one weakness
of his theorem: the branch set is not a submanifold.  He claimed that this can
be easily resolved, but gave no indication of the proof.
In 1986 Feighn~\cite{feighn} substantiated Alexander's claim, 
Modifying the branch set to be a link.

Thurston showed the existence of a {\it universal
link}, that is, a link $L \subset S^{3}$ so that every 3-manifold is a cover of $S^{3}$
branched along $L$.  Hilden, Lozano and Montesinos~\cite{hlm1}~\cite{hlm2} drastically simplified 
Thurston's example showing, in particular, that the figure eight knot is
universal.  Cao and Meyerhoff~\cite{CaoMeyehoff} showed that the
figure eight knot is the hyperbolic link of smallest 
volume.  In this paper, we 
consider hyperbolic links and consider their volume as a measure
of complexity, hence we see that every 3-manifold is a 
cover of $S^{3}$, branched along the simplest possible link.

Our goal is to define and study invariant that asks: how efficient is
the presentation of a 3-manifolds as a branched over of $S^{3}$?
We do this as follows: 
let $M$ be a $p$-fold cover of $S^{3}$, branched 
along the hyperbolic link $L$.  We denote this as $M \stackrel{p}{\to} (S^3,L)$
(read: $M$ is a $p$-fold cover of $S^{3}$ branched along $L$).
The complexity of $M \stackrel{p}{\to} (S^3,L)$  is defined to be
the degree of the cover times the volume of $L$, that is:
$$p \vol[S^3 \setminus L].$$ 

The {\it link volume} of $M$, denoted $\lv[M]$, is the infimum of the complexities of all
covers $M \stackrel{p}{\to} (S^3,L)$, subject to the constraint that $L$ is a hyperbolic link; that is:
$$\lv[M] = \inf \{ p \vol[S^3 \setminus L] | M \stackrel{p}{\to} (S^3,L)\, ; \, L \mbox{ hyperbolic}\}.$$

Given a hyperbolic manifold $M$ we consider its volume, $\vol[M]$, as
its complexity.  This is consistent with our attitude towards hyperbolic
links, and is considered very natural by many 3-manifold
topologists.  Why is that?  What is it that the volume
actually measures?  Combining results of Gromov, J\o rgensen, and 
Thurston (for a detailed exposition see~\cite{KR}) 
we learn the following.  Let $t_C(M)$ denote the minimal number of 
tetrahedra required to triangulate a link exterior in $M$, that is, the least 
number of tetrahedra required to triangulate $M \setminus N(L)$, 
where the minimum is taken over all possible links $L \subset M$ 
(possibly, $L = \emptyset$) and
all possible tringulations of $M \setminus N(L)$.
Then there exist constants $a, b>0$ so that
\begin{equation}
a \vol[M] \leq t_C(M) \leq b\vol[M].
\end{equation}
We consider invariants up-to linear equivalence, and so we see that 
$\mbox{Vol}$ and $t_C$ are equivalent.  
This gives a natural, topological interpretation of
the volume.   In this paper we begin the study of the link volume, with the 
ultimate goal of obtaining a topological understanding of it.

The basic facts about the link volume are presented in Section~\ref{sec:basic}.
The most important are the following easy observations:  
	\begin{enumerate} 
	\item The link volume is obtained, that is, for any manifold $M$ there is a 
	cover $M \stackrel{p}{\to} (S^3,L)$ so that $\lv[M] = p \vol[S^{3} \setminus L]$.
	\item For every hyperbolic 3-manifold $M$ we have:
	$$\vol[M] < \lv[M].$$
	\end{enumerate}
The second point begs the question: is the link volume of hyperbolic manifolds 
equivalent to the hyperbolic 
volume?  As we shall see below, the results of this paper lead us to believe
that this is not the case (Conjectures~\ref{conj:LV>>V1} and~\ref{conj:LV>>V2}).

The right hand side of the Inequality~(1) implies that, for fixed $V$,
any hyperbolic manifold of volume less than $V$ can be obtained from
a manifold $X$ by Dehn filling, where $X$ is constructed using at most
$bV$ tetrahedra.  Since there are only finitely many such $X$'s, 
this implies the celebrated 
result of J\o rgensen--Thurston: for any $V>0$, there exists finite collection of
compact ``parent manifolds'' $\{X_i,\dots X_{n}\}$, so that $\partial X_{i}$ consists of tori,
and any hyperbolic manifold of
volume at most $V$ is obtained by Dehn filling $X_{i}$, for some $i$.
Our first result is:
\begin{thm}
\label{thm:jt}
There exists a universal constant $\Lambda > 0$ so that
for every $V>0$, there is a finite collection $\{\phi_{i}:X_{i} \to E_{i}\}_{i=1}^{n_{V}}$,
where $X_{i}$ and $E_{i}$ are complete finite volume hyperbolic manifolds
and $\phi_{i}$ is an unbranched cover, and for any cover  
$M \stackrel{p}{\to} (S^3,L)$ with $p \vol[S^{3} \setminus L] < V$
the following hold:
	\begin{enumerate}
	\item For some $i$, $M$ is obtained from $X_{i}$ by Dehn filling, $S^{3}$ is 
	obtained from $E_{i}$ by Dehn filling, and the following diagram commutes 
	(where the vertical arrows represent the covering
	projections and the horizontal arrows represent Dehn fillings):	
\begin{center}
\begin{picture}(200,60)(0,0)
  \put(  0,  0){\makebox(0,0){$E_{i}$}}
  \put(  0,50){\makebox(0,0){$X_{i}$}}
  \put(  0, 40){\vector(0,-1){30}}   
  \put( 10,  0){\vector(1,0){75}}
  \put(100,  0){\makebox(0,0){$S^3,L$}}
  \put(100, 40){\vector(0,-1){30}}
  \put(10,28){\makebox(0,0){$/\phi_{i}$}}
  \put(108,28){\makebox(0,0){$/\phi$}}
  \put(108,28){\makebox(0,0)}
  \put(100,50){\makebox(0,0){$M$}}
  \put( 10,50){\vector(1,0){80}}
\end{picture}
\end{center}
	\item $E_{i}$ can be triangulated
	using at most $\Lambda V/p$ tetrahedra (hence $X_{i}$ can be triangulated 
	using at most $\Lambda V$ tetrahedra and $\phi_{i}$ is simplicial).
	\end{enumerate}
\end{thm}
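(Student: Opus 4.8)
The plan is to remove the branch locus from both sides of the cover, apply the hyperbolic form of the J\o rgensen--Thurston theorem to the cusped manifold that remains downstairs, and then pull the resulting auxiliary drilling back up through the cover. So fix a cover $M \stackrel{p}{\to} (S^3,L)$ with $p\vol[S^3 \setminus L] < V$, write $\phi \colon M \to S^3$ for the branched covering, and let $\wt L = \phi^{-1}(L)$, a link in $M$. Put $E := S^3 \setminus N(L)$ and $X := M \setminus N(\wt L)$; restricting $\phi$ to the unbranched locus gives a genuine unbranched covering $\phi \colon X \to E$ of degree $p$, the manifold $S^3$ is recovered from $E$ by Dehn filling $L$, and $M$ is recovered from $X$ by Dehn filling $\wt L$ along the slopes lying over the corresponding meridians. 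Since $L$ is hyperbolic, $E$ is a complete finite-volume hyperbolic manifold with $\vol[E] = \vol[S^3 \setminus L]$, so $p\vol[E] < V$ and in particular $\vol[E] < V/p$; and $X$, being a degree-$p$ cover of $E$, is itself complete, finite-volume and hyperbolic with $\vol[X] = p\vol[E] < V$.

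Next, apply to the cusped hyperbolic manifold $E$ the Gromov--J\o rgensen--Thurston estimate underlying Inequality~(1): there is a universal constant $\Lambda > 0$ (the $\Lambda$ of the statement) and a link $L'$ in the interior of $E$ such that $E_i := E \setminus N(L')$ is again complete, finite-volume and hyperbolic, is triangulated by at most $\Lambda\vol[E]$ --- hence, as $\vol[E] < V/p$, by at most $\Lambda V/p$ --- tetrahedra, and $E$ is recovered from $E_i$ by Dehn filling $L'$, so that the cusps of $E$ coming from $L$ persist in $E_i$. Consequently $S^3$ is obtained from $E_i = S^3 \setminus N(L \cup L')$ by Dehn filling $L \cup L'$. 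To transport this through $\phi$, let $H \le \pi_1 E$ be the index-$p$ subgroup corresponding to $\phi\colon X \to E$. The inclusion $E_i \hookrightarrow E$ induces a surjection $\pi_1 E_i \to \pi_1 E$ (any loop can be pushed off the link $L'$), so the preimage $H_i$ of $H$ is an index-$p$ subgroup of $\pi_1 E_i$; let $\phi_i \colon X_i \to E_i$ be the corresponding covering, which is connected precisely because $\pi_1 E_i \to \pi_1 E$ is onto. Unravelling the definitions, $X_i = \phi^{-1}(E_i) = M \setminus N(\wt L \cup \phi^{-1}(L'))$, a finite cover of the hyperbolic manifold $E_i$ and hence complete, finite-volume and hyperbolic; $M$ is recovered from $X_i$ by Dehn filling $\wt L \cup \phi^{-1}(L')$; and the square in the statement commutes because $\phi_i = \phi|_{X_i}$, the horizontal arrows being the inclusions $X_i \subset M$ and $E_i \subset S^3$ together with the matching filling slopes. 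Finally, lifting a triangulation of $E_i$ with at most $\Lambda V/p$ tetrahedra through the covering $\phi_i$ triangulates $X_i$ with at most $p \cdot (\Lambda V/p) = \Lambda V$ tetrahedra and makes $\phi_i$ simplicial. This establishes~(1) and~(2).

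It remains to check that only finitely many covers $\phi_i \colon X_i \to E_i$ arise in this way as $M \stackrel{p}{\to} (S^3,L)$ ranges over all covers of complexity less than $V$. The volume of any hyperbolic link exterior is bounded below by a universal positive constant (by Cao--Meyerhoff it is at least the volume of the figure-eight knot complement), so $p\vol[S^3 \setminus L] < V$ forces $p$ to lie in a finite set of positive integers; for each such $p$, part~(2) places $E_i$ among the finitely many hyperbolic manifolds triangulable with at most $\Lambda V$ tetrahedra, and a finitely generated group has only finitely many subgroups of a given index $p$, so there are only finitely many possibilities for $\phi_i \colon X_i \to E_i$. Assembling these over all admissible $p$ produces the asserted finite family $\{\phi_i \colon X_i \to E_i\}_{i=1}^{n_V}$, and the construction above attaches to each cover $M \stackrel{p}{\to} (S^3,L)$ of complexity less than $V$ one of them satisfying~(1) and~(2).

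The one genuinely delicate step is the choice of the auxiliary link $L'$: it must be possible to drill $E$ along $L'$ so that $E \setminus N(L')$ is \emph{honestly} hyperbolic (not merely a link exterior), so that it carries a triangulation linear in $\vol[E]$, and so that the cusps inherited from $L$ are left unfilled. This is exactly the hyperbolic refinement of the J\o rgensen--Thurston argument recalled around Inequality~(1): one drills the core geodesics of the short Margulis tubes appearing in the thick--thin decomposition of $E$, and these short geodesics have embedded tubes wide enough that drilling them preserves complete hyperbolicity of finite volume. So this step requires care rather than a new idea; the remainder --- stripping off the branch locus, pulling the drilling back through $\phi$, and the finiteness count --- is routine.
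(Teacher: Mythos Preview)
Your argument is correct and follows essentially the same route as the paper's: remove the branch locus to obtain an unbranched cover $X\to E$ of cusped hyperbolic manifolds, drill the short geodesics of $E$ (the cores of the thin Margulis tubes) to obtain $E_i$ with a triangulation of size linear in $\vol[E]$, pull this back through the cover to get $X_i$, and conclude finiteness from the bound on tetrahedra and on $p$. The paper packages the drilling step as taking the $d$-neighborhood of the $\mu$-thick part and cites Kojima for its hyperbolicity, where you describe it as drilling a link $L'$ and invoke the tube geometry; these are the same construction.
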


For $V>0$, let $\mathcal{M}_{V}$ denote the set of manifolds of 
link volume less than $V$.  Since the link volume is always
obtained, applying Theorem~\ref{thm:jt} to covers realizing the link 
volumes of manifolds in $\mathcal{M}_{V}$,
we obtain a finite family of ``parent manifolds'' $X_{1},\dots,X_{n}$
that give rise to every manifold in $\mathcal{M}_{V}$ via Dehn filling, much like
J\o rgensen--Thurston.  The 
extra structure given by the projection $\phi_{i}:X_{i} \to E_{i}$ implies
that the fillings that give rise to manifolds of low link volume are very
special:

Fix $V$, and let $X_i$ be as in the statement of Theorem~\ref{thm:jt}.  Then for any
hyperbolic manifold $M$ that is obtained by filling $X_i$ we have
$\vol[M] < \vol[X_i]$.  On the other hand, it is by no means clear 
that $\lv[M] < V$, for it is not easy to complete the diagram in 
Theorem~\ref{thm:jt}:
	\begin{enumerate}
	\item $X_i$ must cover a manifold $E_i$.
	\item The covering projection and the filled slopes
	must be compatible (see Subsection~\ref{subsec:slopes} for definition).
	\item The slopes filled on $E_i$ must give $S^3$, a very unusual situation since $E_i$ is hyperbolic.
	\end{enumerate}
These lead us to believe that the link volume, as a fuction, is much bigger
than the volume.  Specifically we conjecture:

\begin{conj}
\label{conj:LV>>V1}
Let $X$ be a complete finite volume hyperbolic manifold with one cusp.  
For a slope $\alpha$ on $\partial X$, let $X(\alpha)$ denote
the closed manifold obtained by filling $X$ along $\alpha$.

Then for any $V>0$, 
there exists a finite set of slopes $\mathcal{F}$ on $\partial X$,
so that if $\lv[X(\alpha)] < V$, then $\alpha$ intersects some slope in 
$\mathcal{F}$ at most $V/2$
times.
\end{conj}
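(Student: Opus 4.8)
The plan is to run Theorem~\ref{thm:jt} in reverse. Fix $X$ and $V$. Thurston's hyperbolic Dehn surgery theorem leaves only finitely many slopes $\alpha$ with $X(\alpha)$ non-hyperbolic, and these may be placed in $\mathcal F$; so it suffices to bound $\alpha$ when $X(\alpha)$ is hyperbolic with $\lv[X(\alpha)]<V$. Since the link volume is attained, fix a cover $X(\alpha)\stackrel{p}{\to}(S^3,L)$ with $p\vol[S^3\setminus L]=\lv[X(\alpha)]<V$; because the figure-eight knot complement is the hyperbolic link complement of least volume and that volume exceeds $2$, this forces $p<V/2$. Applying Theorem~\ref{thm:jt} now produces an index $i$ from the finite list $\{\phi_i\colon X_i\to E_i\}$ that the theorem furnishes for this $V$, together with a commuting square realizing $X(\alpha)$ as a Dehn filling of $X_i$ and $S^3$ as a compatible Dehn filling of $E_i$, and with $\vol[X_i]$ bounded above in terms of $\Lambda V$.

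The next step is to recover $\alpha$ from this finite package. Let $\gamma_\alpha$ be the core of the filling solid torus, so that $X(\alpha)\setminus\gamma_\alpha\cong X$. If $\vol[X(\alpha)]$ is bounded away from $\vol[X_i]$, then $X(\alpha)$ is one of finitely many hyperbolic manifolds, each of which is a filling of $X$ along only finitely many slopes, so these $\alpha$ are finite in number and go into $\mathcal F$. Otherwise all filling slopes on $X_i$ are geometrically long; choosing $\epsilon_0$ smaller than the systole of every $X_i$ in the list, the geodesics of $X(\alpha)$ of length less than $\epsilon_0$ are then precisely the cores of the solid tori glued onto $X_i$, and — after discarding the further finite set of $\alpha$ for which $\gamma_\alpha$ itself has length at least $\epsilon_0$ — $\gamma_\alpha$ coincides with one of those cores. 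Drilling it undoes one $X_i$-filling, so $X\cong X_i(\vec s)$ is obtained from $X_i$ by filling all but one of its cusps. As $X$ and $X_i$ are fixed, the number of such tuples $\vec s$ should be finite, and since the mapping class group of the hyperbolic manifold $X$ is finite so is the number of identifications of $\partial X$ with the remaining cusp of $X_i$. Under any one of them, $\alpha$ is exactly the slope that, filled on that last cusp, recovers $X(\alpha)$, and commutativity of the square forces $\alpha$ to be the $\phi_i$-preimage of the slope filled on the corresponding cusp of $E_i$ when $E_i$ is Dehn filled to produce $S^3$.

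Thus everything reduces to controlling, for each of the finitely many $E_i$, the slopes on a designated cusp that, together with the fillings forced on its remaining cusps, produce $S^3$. Writing $E_i'$ for $E_i$ with those remaining cusps filled, one asks how many slopes $\sigma$ satisfy $E_i'(\sigma)\cong S^3$; by the Gordon--Luecke theorem a compact orientable $3$-manifold with a single torus boundary component admits at most one such filling. Consequently, granting the finiteness assertions above, $\alpha$ ranges over a finite set and the conjecture holds with the left-hand side equal to $0$; the bound $V/2$ is needed only as a fallback should some finiteness claim fail, in which case it would be extracted from $p<V/2$ together with the fact that pulling a slope back along $\phi_i$ multiplies geometric intersection numbers by at most the cusp covering degree.

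The main obstacle is the assertion that the fixed multi-cusped hyperbolic manifold $X_i$ has only finitely many partial fillings homeomorphic to $X$ — equivalently, that $X$ contains only finitely many isotopy classes of link whose exterior is homeomorphic to $X_i$. For a single cusp this follows from standard results, but in the multi-cusped setting the geodesics one drills may be long and their drilled volumes need not stay bounded away from $\vol[X]$, and I see no way to exclude an infinite family without new input. It is exactly here that the heuristic obstacles (1)--(3) listed above would have to be converted into a rigorous count; and if only clustering rather than genuine finiteness can be established, arranging the constant to come out to $V/2$ by tracking the degree bound through the pullbacks is the remaining technical point.
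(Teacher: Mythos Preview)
The statement you are addressing is Conjecture~\ref{conj:LV>>V1}, not a theorem; the paper offers no proof of it. The authors present it as an open problem, motivated precisely by the three heuristic obstacles listed just before the conjecture (that $X_i$ must cover some $E_i$, that the filling slopes must be compatible with the cover, and that the induced fillings of $E_i$ must yield $S^3$). So there is no argument in the paper for you to be compared against.

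That said, your outline is a reasonable attempt to turn those heuristics into a proof, and you have honestly flagged the genuine obstruction. The claim that a fixed multi-cusped hyperbolic $X_i$ admits only finitely many partial fillings homeomorphic to the fixed one-cusped $X$ is the crux, and it is not known; the cosmetic-surgery and knot-complement results you invoke do not suffice when several cusps are in play. A second, smaller gap: your dichotomy ``either $\vol[X(\alpha)]$ is bounded away from $\vol[X_i]$ or all filling slopes on $X_i$ are geometrically long'' is too coarse, since some cusps of $X_i$ could carry short slopes while others carry long ones, and then the short geodesics of $X(\alpha)$ need not coincide exactly with the $X_i$-filling cores. Finally, note that if your finiteness claims did hold you would prove something strictly stronger than the conjecture (a genuinely finite set of slopes rather than a finite set of ``lines'' that $\alpha$ must stay close to), so the fallback mechanism you sketch in your last paragraph---extracting the $V/2$ bound from $p<V/2$ and the cusp covering degree---is likely where the real content would have to live, and that part is not worked out.
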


As is well known, the volume of the figure eight knot complement is about $2.029\dots$, 
twice $v_{3}$,  the volume of a regular ideal tetrahedron.  
By considering manifolds that are obtained by
Dehn filling the figure eight knot exterior we see that
Conjecture~\ref{conj:LV>>V1} implies:

\begin{conj}
\label{conj:LV>>V2}
For every $V>0$ there exists a manifold $M$ so that $\vol[M] < 2v_{3} = 2.029\dots$ 
and $\lv[M] > V$.
\end{conj}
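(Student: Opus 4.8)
The plan is to deduce Conjecture~\ref{conj:LV>>V2} from Conjecture~\ref{conj:LV>>V1} by applying the latter to the exterior $X$ of the figure eight knot. This $X$ is a one-cusped complete finite volume hyperbolic manifold with $\vol[X] = 2v_3 = 2.029\dots$, and by Thurston's hyperbolic Dehn surgery theorem $X(\alpha)$ is hyperbolic for all but finitely many slopes $\alpha$ on $\partial X$. For each such slope Thurston's theorem also gives $\vol[X(\alpha)] < \vol[X] = 2v_3$, since Dehn filling strictly decreases volume; so every hyperbolic filling of $X$ already meets the volume requirement of Conjecture~\ref{conj:LV>>V2}, and all that remains is to arrange the link volume to be large.

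First I would apply Conjecture~\ref{conj:LV>>V1} to $X$ with $V+1$ in place of $V$ (the shift is cosmetic, to produce a strict inequality at the end). This gives a finite set $\mathcal{F}$ of slopes on $\partial X$ such that $\lv[X(\alpha)] < V+1$ forces $\alpha$ to meet some slope of $\mathcal{F}$ at most $(V+1)/2$ times. Taking the contrapositive, it suffices to exhibit a slope $\alpha$ with $\Delta(\alpha,\beta) > (V+1)/2$ for every $\beta\in\mathcal{F}$ (here $\Delta$ denotes geometric intersection number) for which $X(\alpha)$ is hyperbolic: then $M := X(\alpha)$ satisfies $\vol[M] < 2v_3$ and $\lv[M] \ge V+1 > V$.

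The remaining point is the elementary assertion that infinitely many slopes $\alpha$ on the torus $\partial X$ satisfy $\Delta(\alpha,\beta) > (V+1)/2$ for all $\beta$ in the finite set $\mathcal{F}$. Fixing a basis of $H_1(\partial X;\mathbb{Z})$, identify slopes with primitive integer vectors up to sign, so that $\Delta(\alpha,\beta) = |\det(\alpha,\beta)| = |\alpha|\,|\beta|\,|\sin\theta|$, where $\theta$ is the angle between $\alpha$ and $\beta$. If the direction of $\alpha$ is kept at angle at least some fixed $\theta_0 > 0$ from each of the finitely many directions determined by the elements of $\mathcal{F}$, then $\Delta(\alpha,\beta)$ grows linearly in $|\alpha|$; since primitive vectors have positive density in the lattice, there are infinitely many such $\alpha$ of arbitrarily large norm, hence infinitely many with $\Delta(\alpha,\beta) > (V+1)/2$ for all $\beta$. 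Removing the finitely many slopes for which $X(\alpha)$ is not hyperbolic still leaves infinitely many valid choices of $M$, which completes the argument.

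I do not expect any step of this derivation to be a serious obstacle; the entire difficulty is hidden in Conjecture~\ref{conj:LV>>V1}, which rests on Theorem~\ref{thm:jt} and on the three rigidity constraints listed after it (the parent $X_i$ must cover a hyperbolic $E_i$, the covering and the Dehn fillings must be compatible, and the fillings performed on $E_i$ must yield $S^3$). Conditional on Conjecture~\ref{conj:LV>>V1}, the only nontrivial ingredients are Thurston's volume monotonicity under Dehn filling and a routine lattice-point count, neither of which poses real trouble.
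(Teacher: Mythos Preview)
Your proposal is correct and follows exactly the approach sketched in the paper: the paper deduces Conjecture~\ref{conj:LV>>V2} from Conjecture~\ref{conj:LV>>V1} by considering Dehn fillings of the figure eight knot exterior, noting that its volume is $2v_3$. You have simply filled in the routine details (Thurston's hyperbolic Dehn surgery theorem, volume monotonicity, and the elementary fact that one can find slopes far from any finite set) that the paper leaves implicit.
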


To describe our second result, we first define the knot volume and a few other variations of the link volume; for the definition
simple cover see the Subsection~\ref{subsec:monte}.

\begin{dfns}
\label{dfns:KV}
	\begin{enumerate}
	\item The {\it knot volume} of a 3-manifold $M$ is obtained by considering only hyperbolic knots in the 
	definition of the link volume,
	that is, 
	$$\kv[M] = \inf \{ p \vol[S^3 \setminus K] | M \stackrel{p}{\to} (S^3,K); K \mbox{ is a hyperbolic knot}\}.$$
	\item The {\it simple knot volume} of a 3-manifold $M$ is obtained by considering only simple 
	covers in the definition of the knot volume,
	that is, 
	$$\kvs[M] = \inf \left\{ p \vol[S^3 \setminus K] \Bigg| M \stackrel{p}{\to} (S^3,K); \begin{array}{l} K 
	\mbox{ a hyperbolic knot}, \\ \mbox{and the cover is simple} \end{array} \right\}.$$
	\item For an integer $d \geq 3$, the {\it simple $d$-knot volume} in obtained by 
	restricting to $p$-fold covers for $p \leq d$ in the definition of the simple knot volume, that is,
	$$\kvsd[d M] = \inf \left\{ p \vol[S^3 \setminus K] \Bigg| M \stackrel{p}{\to} (S^3,K); \begin{array}{l} K 
	\mbox{ a hyperbolic knot}, \\ \mbox{the cover is simple}, \\ \mbox{and } p \leq d \end{array} \right\}.$$
	\end{enumerate}

\end{dfns}

Similarly, one can play with various restrictions on the covers considered.  However, one must 
ensure that the definition makes sense.  For example, the {\it regular}
link volume can be defined using only regular covers.  This makes no sense, as not every 
manifold is the regular cover of $S^{3}$.  It follows from Hilden~\cite{hilden} 
and Montesinos~\cite{montesinos}
that every 3-manifold is a simple 3-fold cover of $S^{3}$ branched over 
a hyperbolic knot; hence the definitions above make sense.  
Our next result is an upper bound, and holds for any of the variations listed in Definitions~\ref{dfns:KV}.  
Since these definitions are obtained by adding restrictions
to the covers considered, it is clear that $\kvsd[3 M]$ is greater than or equal to any of the others,
including the link volume.  
We therefore phrase Theorem~\ref{thm:dehn} below
for that invariant.  But first we need:

\begin{dfn}
\label{dfn:depth}
Let $T$ be a torus, and $\mu$, $\lambda$ generators for $H_{1}(T)$.  By identifying $\mu$ with $1/0$ and $\lambda$
with $0/1$, we get an identification of the {\it slopes} of $H_1(T)$ with $\mathbb{Q} \cup \{1/0\}$,
where an element of $H_{1}(T)$ is called a {\it slope} if it
can be represented by a connected simple closed curve on $T$.  
Then the {\it depth} of a slope $\alpha$, 
denoted $\depth[\alpha]$, is the length of
the shortest contiuded fraction expenssion representing $p/q$.  For a collection of tori $T_{1},\dots,T_{n}$
with bases chosen for $H_{1}(T_{i})$ for each $i$, we define 
$$\depth[p_{1}/q_{1},\dots,p_{n}/q_{n}] = \Sigma_{i=1}^{n} \depth[p_{i}/q_{i}].$$
\end{dfn}

We are now ready to state:

\begin{thm}
\label{thm:dehn}
Let $X$ be a connected, compact orientable $3$-manifold, $\partial X$ consisting of $n$ tori
$T_{1},\dots,T_{n}$, and fix $\mu_{i}$, $\lambda_{i}$, generators for $H_{1}(T_{i})$ for each $i$.

Then there exist a universal constant $B$ and a constant $A$ that depends on $X$ and the choice of
bases for $H_{1}(T_{i})$, so that for any $p_{i}/q_{i}$ ($i=1,\dots,n$),
$$\kvsd[3 X(p_{1}/q_{1},\dots,p_{n}/q_{n})] < A + B \depth[p_{1}/q_{1},\dots,p_{n}/q_{n}],$$
where $X(p_{1}/q_{1},\dots,p_{n}/q_{n})$ denotes the manifold obtained by filling $X$ along
the slopes $p_{i}/q_{i}$.
\end{thm}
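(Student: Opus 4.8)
The plan is to keep the degree of the branched cover fixed, equal to $3$, and to push all dependence on the filling slopes into the branch knot: for each choice of $p_i/q_i$ I will produce a simple $3$-fold cover $X(p_1/q_1,\dots,p_n/q_n)\stackrel{3}{\to}(S^3,K)$ with $K$ a hyperbolic knot whose complement is assembled from a fixed piece (depending only on $X$ and the chosen bases) together with $\depth[p_1/q_1,\dots,p_n/q_n]$ ``twist regions'', so arranged that each twist region adds only a universally bounded amount to $\vol[S^3\setminus K]$, no matter how large the corresponding continued fraction entry is. The conceptual point is that a huge continued fraction entry does \emph{not} cost much volume: a twist region with many crossings can be augmented by a crossing circle, and the un-augmented knot complement is then a Dehn filling of the augmented one, hence by Thurston's Dehn surgery theorem has volume bounded by the (fixed, finite) volume of the augmented link. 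To set things up, fix for each $i$ a shortest continued fraction expansion $p_i/q_i=[a_{i,1},\dots,a_{i,\ell_i}]$, so $\ell_i=\depth[p_i/q_i]$; let $\widehat X$ be the closed manifold obtained from $X$ by the trivial filling (gluing a solid torus $V_i$ to $T_i$ with meridian sent to $\mu_i$), and let $k_i\subset\widehat X$ be the core of $V_i$, so that $\widehat X\setminus N(\bigcup_i k_i)=X$. By the theorem of Hilden~\cite{hilden} and Montesinos~\cite{montesinos}, $\widehat X$ is a simple $3$-fold cover of $S^3$ branched over a knot; using the flexibility of Montesinos's moves (cf.\ Subsection~\ref{subsec:monte}) I would refine this to a cover $p_0\colon\widehat X\to(S^3,K_0)$ with $K_0$ hyperbolic such that, for each $i$, there is an auxiliary unknot $\kappa_i\subset S^3\setminus K_0$ over which $p_0$ restricts to the trivial product $3$-fold cover, one of whose three lifts is $k_i$, and near which $K_0$ runs as a pair of parallel strands whose twisting is dual to the filling slope on $T_i$ -- the sort of normalization that the ``compatible slopes'' machinery of Subsection~\ref{subsec:slopes} is designed to provide.

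Since the filling slope on $T_i$ in $\widehat X$ is $\mu_i=1/0$ and $p_i/q_i=[a_{i,1},\dots,a_{i,\ell_i}]$, replacing the trivial tangle in a ball near $\kappa_i$ by the rational tangle with this continued fraction expansion changes the filling slope on $T_i$ from $1/0$ to $p_i/q_i$, while downstairs it modifies $K_0$ by inserting a rational tangle with exactly $\ell_i$ twist regions. Performing this for all $i$ at once, and noting (Montesinos trick) that the cover survives each such local modification as a simple $3$-fold cover, yields a simple $3$-fold cover $X(p_1/q_1,\dots,p_n/q_n)\stackrel{3}{\to}(S^3,K_1)$ in which $K_1$ is obtained from $K_0$ by inserting a total of $\sum_i\ell_i=\depth[p_1/q_1,\dots,p_n/q_n]$ twist regions. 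A single fixed gadget inserted near $K_0$ (affecting only the constant $A$) guarantees that the final branch set $K$ is a hyperbolic knot; since $K_0$ can be taken to be a knot and the rational tangles are inserted along it, no component-merging is needed.

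For the volume estimate, augment $K$: put a crossing circle around each of the $\depth[p_1/q_1,\dots,p_n/q_n]$ inserted twist regions and around each twist region of the fixed part, obtaining a fully augmented link $K^{\mathrm{aug}}$. By the standard theory of augmented links, $K^{\mathrm{aug}}$ is hyperbolic and $\vol[S^3\setminus K^{\mathrm{aug}}]\le A_0+B_0\,\depth[p_1/q_1,\dots,p_n/q_n]$, where $B_0$ is a universal constant (an upper bound for the volume contributed per crossing circle) and $A_0$ absorbs the finitely many fixed crossing circles. Since $K$ is recovered from $K^{\mathrm{aug}}$ by Dehn filling all the crossing circles and $K$ is hyperbolic, Thurston's Dehn surgery theorem gives $\vol[S^3\setminus K]<\vol[S^3\setminus K^{\mathrm{aug}}]$. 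Therefore
\[
\kvsd[3 X(p_1/q_1,\dots,p_n/q_n)]\ \le\ 3\,\vol[S^3\setminus K]\ <\ 3A_0+3B_0\,\depth[p_1/q_1,\dots,p_n/q_n],
\]
which is the asserted inequality with $B=3B_0$ universal and $A=3A_0$ depending on $X$ and the choice of bases.

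The main obstacle is not the volume count -- once the augmented-link picture is in place that part is essentially forced -- but the normalization in the first paragraph: arranging one simple $3$-fold cover $p_0\colon\widehat X\to(S^3,K_0)$ in which all of the following hold simultaneously -- the auxiliary unknots $\kappa_i$ have product neighbourhoods lifting to the filling cores $k_i$; $K_0$ runs near each $\kappa_i$ as a pair of strands whose twisting realizes precisely the elementary moves on the filling slope of $T_i$; $K_0$ is a knot; and $K_0$ carries enough built-in complexity that every subsequent rational-tangle insertion and the fixed hyperbolicity gadget leave the branch set hyperbolic with a diagram whose twist regions can honestly be augmented. Each ingredient is a standard manoeuvre in Montesinos's branched-covering calculus, but making them all hold at once, and in a form compatible with the volume bookkeeping, is where the work lies.
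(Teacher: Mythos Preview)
Your overall strategy---fix the degree at $3$, realize Dehn filling downstairs by inserting rational tangles, and bound the volume by the number of twist regions---is exactly the paper's. The volume estimate via augmented links is essentially equivalent to what the paper does: it cites Lackenby's bound $\vol[S^3\setminus K]\le c\,t(K)$ on the twist number, whose proof goes through precisely the augmented-link argument you describe.

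Where you diverge is in the setup and, as you yourself flag, in the ``normalization'' that guarantees hyperbolicity and knottedness uniformly in the slopes. The paper does \emph{not} start from a closed manifold $\widehat X$ with auxiliary unknots $\kappa_i$; instead it removes $n$ open balls from the interior of $X$ to get $X^*$ and builds (via a modification of Hilden's handlebody cover) a simple $3$-fold cover $X^*\to S^{3,*}$ branched over a tangle $T$ meeting each boundary sphere in four points. Each boundary sphere then lifts to a torus (via the hyperelliptic involution) together with a sphere, so that inserting a rational tangle of slope $p_i/q_i$ in the $i$th ball downstairs corresponds exactly to Dehn filling $T_i$ along $p_i/q_i$ upstairs. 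This is more direct than your $\kappa_i$ picture and sidesteps the compatibility issues you mention.

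The part you call the ``main obstacle'' is handled not by a single ad hoc gadget but by a systematic diagrammatic argument (the paper's Lemma~\ref{lem:hyperbolicity}). The tangle $T$ is completed to a link $\widehat T$ by inserting placeholder rational tangles $\widehat R_i\in\{\pm1,\pm2,\pm1/2\}$, one for each of finitely many equivalence classes of slopes. The diagram is then massaged---by isotopies and Montesinos moves $\pm1\mapsto\mp2$, $0\mapsto\pm3$, $\pm1\mapsto\pm4$---until it is connected, strongly prime, alternating, a knot, and every crossing outside the $B_i$ is three-colored. Since the actual rational tangles $R_i$ can be drawn alternating with the same boundary data as $\widehat R_i$, swapping them in preserves all these properties. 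Hyperbolicity of the final branch knot is then Menasco's theorem (non-split strongly prime alternating $\Rightarrow$ simple) together with Thurston, and knottedness is built in. This is the content you were missing: the robust, slope-independent hyperbolicity certificate comes from forcing the whole diagram to be alternating and prime, not from a local gadget.
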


As noted above, $\kvsd[3 M]$ is greater than or equals to all the invariants defined in Definition~\ref{dfn:depth}
and the link volume.
Hence Theorem~\ref{thm:dehn}, which gives an upper bound, holds for all these invariants, and in particular:

\begin{cor}
\label{cor:dehn}
With the hypotheses of Theorem~\ref{thm:dehn},
there  exist a universal constant $B$ and a constant $A$ that depends on $X$ and the choice of
bases for $H_{1}(T_{i})$, so that for any slopes $p_{i}/q_{i}$ ($i=1,\dots,n$),
$$\lv[X(p_{1}/q_{1},\dots,p_{n}/q_{n})] \leq A + B \depth[p_{1}/q_{1},\dots,p_{n}/q_{n}].$$
\end{cor}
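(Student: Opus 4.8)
The plan is to build, for each filling $X(p_1/q_1,\dots,p_n/q_n)$, an explicit simple $3$-fold cover of $S^3$ branched over a hyperbolic knot, whose complexity (that is, $3$ times the volume of the knot complement) is bounded by $A + B\,\depth[p_1/q_1,\dots,p_n/q_n]$. The key idea is that a Dehn filling along a slope $p/q$ can be realized ``cheaply'' inside a branched cover picture by inserting, near the corresponding torus, a standard tangle-replacement whose combinatorial size is proportional to the length of the continued fraction expansion of $p/q$ --- exactly the way one builds a rational tangle from a sequence of twists. So the continued fraction $[a_0,a_1,\dots,a_k]$ for $p_i/q_i$ gets translated into $k+1$ elementary twist moves, each of which adds a bounded amount of volume to the branch link complement (and keeps the cover simple and the degree at $3$ by a Montesinos-style local model).

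Concretely, I would proceed as follows. First, fix once and for all a simple $3$-fold cover $Y \stackrel{3}{\to}(S^3, L_0)$ realizing $X$ ``with its cusps marked'': by Hilden--Montesinos every $3$-manifold, and in particular the double $DX$ or an appropriate closed filling of $X$, is a simple $3$-fold branched cover of $S^3$ over a knot; with a little care one arranges a cover $q:X' \to B'$ where $X'$ is $X$ with some fixed auxiliary filling, $B' \subset S^3$ is a $3$-ball complement picture, and the $n$ boundary tori $T_i$ of $X$ sit over $n$ specified unknotted circles $c_i$ in $S^3$, each covered by a single circle (so the local degree over a meridian disk of $c_i$ is controlled). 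This setup, together with the chosen bases $\mu_i,\lambda_i$, is what the constant $A$ and the ``universal'' part of $B$ will depend on. Second, for each $i$, I replace a neighborhood of $c_i$ by a rational tangle corresponding to $p_i/q_i$: in the base $S^3$ this is the classical operation of doing $p_i/q_i$ surgery-by-tangle-replacement, and upstairs (since everything is local and the cover near $c_i$ is a fixed standard model) it corresponds precisely to the Dehn filling of $T_i$ along $p_i/q_i$ in our chosen bases. The branch set after all $n$ replacements is some link $L$; after a perturbation/connect-sum-to-a-hyperbolic-knot trick (as in the Hilden--Lozano--Montesinos universality arguments) we may take $L$ to be a hyperbolic knot, and the cover to remain simple of degree $3$.

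The volume bound is then the crux. Each rational-tangle insertion for $p_i/q_i$ is built from $\depth[p_i/q_i]$ elementary twist regions; I would estimate $\vol[S^3 \setminus L]$ by decomposing $S^3 \setminus L$ along level spheres into $O(1) + O(1)\cdot\sum_i \depth[p_i/q_i]$ pieces, each of bounded geometric complexity (bounded number of ideal tetrahedra, say a fixed ``twist-region gadget''), and invoking the sub-additivity of volume under gluing along surfaces (Gromov norm / the inequality $t_C(M)\le b\,\vol[M]$ and its converse from the excerpt) to conclude $\vol[S^3\setminus L] < A' + B'\sum_i \depth[p_i/q_i]$. Multiplying by the degree $3$ gives the claimed bound with $B = 3B'$ universal and $A = 3A'$ depending on $X$ and the bases. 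The main obstacle I anticipate is step two: ensuring that the local tangle-replacement in the base lifts to exactly the intended Dehn filling upstairs in the \emph{given} bases $\mu_i,\lambda_i$ and that the cover stays simple --- this requires a careful choice of the standard local model near each $c_i$ (and possibly absorbing a bounded ``correction twist'', independent of $p_i/q_i$, into the constant $A$) so that the meridian of $c_i$ pulls back to $\mu_i$ and a suitable push-off to $\lambda_i$; getting hyperbolicity of the final branch knot, while standard in spirit, will also need the usual ``sufficiently complicated, so apply Thurston'' argument or an explicit appeal to the figure-eight-type universal constructions.
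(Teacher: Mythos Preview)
First, note that the corollary itself is immediate from Theorem~\ref{thm:dehn}: by definition $\lv[M] \leq \kvsd[3 M]$, since the link volume infimizes over a strictly larger class of covers. The paper's proof of the corollary is exactly this one-line observation. Your proposal instead re-derives the content of Theorem~\ref{thm:dehn}, so the relevant comparison is with the paper's proof of that theorem (Section~\ref{sec:dehn}).

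Your high-level strategy matches the paper's: build a simple $3$-fold cover via Hilden's construction so that filling $T_i$ along $p_i/q_i$ corresponds to inserting the rational tangle of slope $p_i/q_i$ into the branch set, then bound volume by twist number and twist number by $\depth[p_i/q_i]$. Two of your steps, however, differ from the paper and contain real gaps.

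Your local model is not the one that works. You place the tori $T_i$ over ``unknotted circles $c_i$, each covered by a single circle''. The paper instead modifies Hilden's handlebody cover to obtain $X^{*} \to S^{3,*}$ (each with $n$ balls removed) so that each torus $T_i$ \emph{double covers a boundary sphere} $\partial\bar B_i$ via the hyperelliptic involution (Proposition~\ref{prop:hilden}); filling $T_i$ along $p_i/q_i$ then literally is inserting the rational tangle of slope $p_i/q_i$ in $\bar B_i$, and the chosen $\mu_i,\lambda_i$ become the horizontal and vertical directions on $\partial B_i$ with no correction twist needed.

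More seriously, your hyperbolicity step does not work as written. A ``connect-sum-to-a-hyperbolic-knot'' move can never yield a hyperbolic branch set, since any nontrivial connected sum has an essential sphere. The paper's substitute is the real technical core: via a sequence of Montesinos moves and diagrammatic isotopies (Lemma~\ref{lem:hyperbolicity}) the branch diagram is made alternating, non-split, strongly prime, and a knot, whence Menasco plus Thurston (Corollary~\ref{cor:hyperbolic}) give hyperbolicity. This is exactly the step you flag as an ``obstacle'', and it is not standard---arranging alternation without disturbing the cover requires that every crossing outside the $B_i$ be three-colored so that $1\mapsto -2$ Montesinos moves are available.

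For the volume bound the paper simply invokes Lackenby's inequality $\vol[S^3\setminus K]\le c\,t(K)$ (Theorem~\ref{lackenby}); your ``decompose along level spheres into twist-region gadgets and use subadditivity'' is essentially a sketch of how that inequality is proved, so on this point the approaches agree.
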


\bigskip

\noindent{\bf Organization.} This paper is organized as follows.  In Section~\ref{sec:background} 
we go over necessary background material.  
In Section~\ref{sec:variations} we explain some possible variation on the link volume.  Notably,
we define the {\it surgery volume} (definition due to Kimihiko Motegi) 
and an invariant denote $pB(M)$ (definition due to Ryan Blair). We show that
{\it ,in contrast to the link volume, }
the surgery volume of hyperbolic manifolds is bounded in terms of their volume.
We also show that $pB(M)$ is linearly equivalent to $g(M)$, the Heegaard genus of $M$.
In Section~\ref{sec:basic} we explain basic
facts about the link volume and list some open questions.
In Section~\ref{sec:jt} we prove Theorem~\ref{thm:jt}.
In Section~\ref{sec:dehn} we prove Theorem~\ref{thm:dehn}.

\bigskip

\noindent{\bf Acknowledgement.}  We thank Ryan Blair, Tsuyoshi Kobayashi, 
Kimihiko Motegi, 
Hitoshi Murakami, and Jair Remigio--Ju\'arez for helpful conversations.

\section{Background}
\label{sec:background}

By {\it manifold} we mean connected, closed, orientable 3-manifold.  In some cases, we consider 
connected, compact, orientable 3-manifolds; then we explicitly say {\it compact manifold}.
By {\it hyperbolic manifold} $X$ we mean a complete, finite volume 
Riemannian 3-manifold locally isometric to $\mathbb{H}^{3}$.  It is well know
that any hyperbolic manifold $X$ is the interior
of a compact manifold $\bar{X}$ and $\bar{X} \setminus X = \partial \bar{X}$ consists
of tori.  To simplify notation, we do not refer to $\bar{X}$ explicitly and call $\partial \bar X$
the {\it boundary of} $X$.  We assume familiarity
with the basic concepts of 3-manifold theory and hyperbolic manifolds, and in particular
the Margulis constant.  By {\it volume} we mean the hyperbolic volume.
The volume of a hyperbolic manifold $M$ is denoted $\vol[M]$.

We follow standard notation.  In particular, by {\it Dehn filling} (or simply {\it filling}) we mean 
attaching a solid torus to a torus boundary component.  

\subsection{Branched covering} 
\label{subsec:covers}
We begin by recalling Alexander's Theorem~\cite{alex}; because this theorem is very
short an elegant, we include a sketch of its proof here.

\begin{thm}[Alexander]
Let $\mathcal{T}$ be a triangulation of $S^{n}$ obtained by doubling an $n$-simplex.
Let $M$ be a closed orientable triangulated $n$-manifold.  Then $M$ is a cover
of $S^{n}$ branched along $\mathcal{T}^{(n-2)}$, the $n-2$-skeleton of $\mathcal{T}$.
\end{thm}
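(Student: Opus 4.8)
The plan is to construct the branched cover $M \to S^n$ directly from the triangulation of $M$, using the model triangulation $\mathcal{T}$ of $S^n$ as the target. First I would set up the combinatorial model: $S^n = \partial \Delta^{n+1}$ is the double of an $n$-simplex $\sigma_0$ along its boundary, so $\mathcal{T}$ has exactly two top-dimensional faces, $\sigma_0$ and its mirror $\sigma_1$, glued along $\partial \sigma_0 = \partial \sigma_1 = \mathcal{T}^{(n-1)}$. Orient $S^n$ and fix the two-coloring of the top simplices of $\mathcal{T}$ by $\{0,1\}$. On the manifold side, choose the triangulation of $M$ and note that, after passing to a subdivision if necessary, one may two-color the $n$-simplices of $M$ so that adjacent simplices receive opposite colors — this is the one place where a genuine argument (rather than a formal construction) is needed, and I return to it below.

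Given the two-coloring of $M$, I would define the map $p : M \to S^n$ simplex by simplex: each $0$-colored $n$-simplex of $M$ is sent affinely and orientation-preservingly onto $\sigma_0$, each $1$-colored $n$-simplex onto $\sigma_1$, with the identification on vertices chosen so that the maps agree on shared $(n-1)$-faces (consistency here is exactly what the alternating coloring buys us, since two simplices sharing an $(n-1)$-face have opposite colors and map to $\sigma_0, \sigma_1$, which agree along $\mathcal{T}^{(n-1)}$). The resulting map $p$ is continuous, simplicial, and surjective. Away from the codimension-two skeleton $\mathcal{T}^{(n-2)}$ — that is, over the open stars of the $(n-1)$-simplices and of the top simplices — $p$ is a local homeomorphism: over the interior of each top simplex it is a disjoint union of homeomorphisms (one per simplex of $M$ of the matching color), and over a point in the interior of an $(n-1)$-face the two incident sheets on each side glue up to give a covering chart, since locally $S^n$ looks like $\mathbb{R}^{n-1} \times (\text{two half-lines glued}) = \mathbb{R}^n$. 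Hence $p$ is an honest covering map over $S^n \setminus \mathcal{T}^{(n-2)}$, and the branch set is contained in $\mathcal{T}^{(n-2)}$; the local model near an $(n-2)$-simplex is the standard branched-covering model $z \mapsto z^k$ crossed with $\mathbb{R}^{n-2}$, where $k$ is the number of $n$-simplices of $M$ around the corresponding $(n-2)$-simplex divided by $2$ (the cone angle being $2\pi$ downstairs).

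The main obstacle is the two-coloring (equivalently, the $\mathbb{Z}/2$-orientability of the dual graph, or the requirement that the "number of sheets" be globally constant). In general the simplices of an arbitrary triangulation of $M$ cannot be alternately two-colored; the standard fix, which I would carry out, is to barycentrically subdivide once (or to pass to the canonical subdivision in which every $n$-simplex acquires a well-defined parity from the rank of the barycenter of its originating face), after which an alternating coloring exists for formal reasons. I would then verify that subdividing $\mathcal{T}$ correspondingly — or, more cleanly, observing that the construction only requires the \emph{existence} of a degree-preserving simplicial map to \emph{some} two-colored triangulation of $S^n$ with two top simplices, which one gets by pushing $\mathcal{T}$ forward under subdivision — keeps the branch set inside the codimension-two skeleton. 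A final bookkeeping step confirms that $p$ has finite degree (equal to the number of $0$-colored simplices of $M$, which also equals the number of $1$-colored ones, by the covering property over an $(n-1)$-face), so $M$ is a finite-sheeted cover of $S^n$ branched over $\mathcal{T}^{(n-2)}$, as claimed.
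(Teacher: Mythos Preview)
Your argument has a genuine gap: the face two-coloring is \emph{not} enough to define the map. You write that ``consistency here is exactly what the alternating coloring buys us,'' but what the alternating coloring buys you is only that adjacent $n$-simplices land in $\sigma_0$ and $\sigma_1$ respectively. It does \emph{not} give you a globally consistent choice of vertex bijection. Sending a simplex affinely onto $\sigma_0$ requires choosing a bijection between its $n+1$ vertices and the $n+1$ vertices of $S^n$; propagating this choice across a shared $(n-1)$-face determines the bijection on the neighbor, but going around a loop in the dual graph can return you to a different bijection on the starting simplex. What you actually need is a proper $(n+1)$-coloring of the \emph{vertices} of $M$ so that every $n$-simplex sees all $n+1$ colors (a ``balanced'' triangulation). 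This does not follow from face two-colorability: the minimal $7$-vertex triangulation of the torus (triangles $\{i,i+1,i+3\}$ and $\{i,i+2,i+3\}$, $i\in\mathbb{Z}/7$) has bipartite dual graph, hence is face two-colorable, but its $1$-skeleton is $K_7$, which admits no proper $3$-coloring. Your construction cannot produce a simplicial map from this triangulation to the two-triangle $S^2$.

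The paper's proof (which is essentially Alexander's) gets the vertex labeling directly and never mentions the face coloring at all: after one barycentric subdivision, each vertex is the barycenter of a unique $k$-face of the original triangulation, and one labels it by that integer $k \in \{0,\dots,n\}$. Adjacent vertices carry distinct labels by construction, so sending each vertex to the vertex of $S^n$ with the same label is a well-defined simplicial map on the $(n-1)$-skeleton; one then extends over each top simplex by choosing the orientation-preserving one of the two possible $n$-simplices of $\mathcal{T}$. Your fix---barycentric subdivision---does happen to supply this structure, but you invoked it for the wrong reason (to get the face two-coloring, which is neither the obstruction nor the cure). The key datum is the vertex labeling, and you should build the map from it rather than from a face coloring.
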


\begin{proof}[Sketch of Proof]
Let $M$ be as above.
Given $\mathcal{T}_M$, a triangulation of $M$, let $\mathcal{T'}_M$ denote its barycentric subdivision.  Each vertex $v$ 
of $\mathcal{T}_M'$ is the center of a $k$-face of $\mathcal{T}_M$, for some $k$.  Label $v$ with the label $k$.
By construction, there are exactly $n+1$ labels, $0,\dots,n$, and no two adjacent vertices have the 
same label.

Note that the 1-skeleton of $\mathcal{T}$
is $K_{n+1}$, the complete graph on $n+{1}$ vertices.  Label these vertices with the labels
$0,\dots,n$ so that every label appears exactly once.

We define a function from  $\mathcal{T}_M'^{(n-1)}$ (the $n-1$ skeleton of  $\mathcal{T}_M'$) to $S^{n}$ by sending each $k$-face
simplicially to the unique $k$-face of $S^{n}$ with the same labeling (for $k < n$); it is easy to see that this function is well defined.
However, the $n$-cells of 
$M$ can be sent to either of the two $n$ simplices of $\mathcal{T}_M'$.  We pick the simplex so that the map is
orientation preserving.

It is left to the reader to verify that this is indeed a cover, branched over the $n-2$ skeleton
of the triangulation of $S^{n}$.
\end{proof}

\begin{lem}
\label{lem:finiteness}
For any compact triangulated $n$-manifold $M$, $B \subset M^{(n-2)}$ a subcomplex, and $d>0$, 
there are only finitely many $d$-fold covers of $M$ branched along $B$.
\end{lem}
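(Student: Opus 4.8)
The plan is to reduce the counting of branched covers to the counting of ordinary covers of an open manifold, where finiteness is classical. First I would pass to the complement: set $N = M \setminus N(B)$, the exterior of a regular neighborhood of the subcomplex $B$ inside $M$. Since $B \subset M^{(n-2)}$ has codimension at least two, $N$ is a connected compact manifold with the same fundamental group surjection behavior we need, and any $d$-fold branched cover $\widetilde M \to M$ with branch locus contained in $B$ restricts over $N$ to an honest (unbranched) $d$-fold cover $\widetilde N \to N$. Conversely, a branched cover is determined by its restriction to the complement of the branch set together with the combinatorial filling data along $B$: once the unbranched cover $\widetilde N \to N$ is fixed, the way it extends across (the strata of) $B$ is governed by the monodromy around the meridional loops of the codimension-two strata, and there are only finitely many ways to complete the branched cover compatibly (one is forced, stratum by stratum, to glue in the appropriate local models, and the number of combinatorial choices is bounded by a function of $d$ and the finitely many simplices of $B$). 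So it suffices to bound the number of connected $d$-fold covers of $N$.

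Next I would invoke the standard fact that connected $d$-fold covers of a connected space $N$ with finitely generated fundamental group, up to equivalence, are in bijection with conjugacy classes of index-$d$ subgroups of $\pi_1(N)$, equivalently with transitive actions of $\pi_1(N)$ on a $d$-element set up to relabeling. Since $M$ is a compact manifold and $B$ is a finite subcomplex, $N$ is compact, hence $\pi_1(N)$ is finitely generated — say by $g$ generators. A transitive $\pi_1(N)$-action on $\{1,\dots,d\}$ is determined by the images of the $g$ generators in the symmetric group $S_d$, so there are at most $(d!)^g$ such actions, a finite number. Modding out by relabeling only decreases this count. Therefore there are finitely many unbranched $d$-fold covers of $N$, and combining with the finitely many extensions across $B$ discussed above, finitely many $d$-fold covers of $M$ branched along $B$.

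The one point requiring a little care — and the place I expect the real content of the argument to sit — is the claim that a branched cover is determined, up to finitely many choices, by its restriction to $N$. For the clean case where $B$ is a manifold (or, after subdividing, where we work stratum by stratum on the pure codimension-two part and then inductively on lower strata), the extension is literally unique: the branched cover is the metric/topological completion of the unbranched cover, and the local model near a branch point of branching order $k$ is the standard $k$-fold cyclic model, pinned down by the meridional monodromy which is already part of the data of $\widetilde N \to N$. When $B$ has lower-dimensional strata (vertices and edges of $M^{(n-2)}$ where several codimension-two faces meet) one must check that the local completions along adjacent strata are consistent; since everything in sight is a finite simplicial complex, this consistency check involves only finitely many simplices and finitely many candidate gluings, so even if uniqueness fails the number of completions is still finite. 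I would therefore phrase the lemma's proof as: (i) branched $d$-fold covers of $(M,B)$ inject into (unbranched $d$-fold covers of $N$) $\times$ (finite set of filling data); (ii) the first factor is finite by the subgroup-counting argument; (iii) hence the total is finite.
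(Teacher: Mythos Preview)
Your proposal is correct and follows essentially the same route as the paper: both reduce the count to homomorphisms from $\pi_1(M\setminus B)$ into the symmetric group $S_d$, and then use that this fundamental group is finitely generated while $S_d$ is finite. The paper is more direct because it simply cites (from Rolfsen) the standard fact that a branched cover is \emph{determined} by its monodromy representation $\pi_1(M\setminus B)\to S_d$, so your separate discussion of ``finitely many filling data'' across the strata of $B$ is unnecessary---the completion over $B$ is unique once the unbranched cover of the complement is fixed.
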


\begin{proof}
It is well known that a $p$-fold cover of  $M$ branched along $B$ is determined by a presentation of $\pi_{1}(M \setminus B)$
into $S_{p}$, the symmetric group on $p$ elements (see, for example, \cite{rolfsen}).
The lemma follows from the fact that $\pi_{1}(M\setminus B)$ 
is finitely generated and $S_{p}$ is finite.
\end{proof}

\subsection{Simple covers and the Montesinos Move}
\label{subsec:monte}

\begin{dfn}
Let $f:M \to N$ be a cover of finite degree $p$ branched along $B \subset N$.  Note that every point
of $N \setminus B$ has exactly $p$ preimages, and every point of $B$ has at most $p$ preimages.
$f:M \to N$ is called {\it simple} if every point of $B$ has exactly $p-1$ preimages.
\end{dfn}

Let $M \to (S^{3},L)$ be a 3-fold simple cover branched along the link $L$.  We view $L$ diagrammatically, as projected
into $S^{2} \subset S^{3}$ in the usual way.  Since the cover is simple, each generator in the Wirtinger presentation
of $S^3 \setminus L$ corresponds to a permutation in the symmetric group on 3 elements (that is, $(1 \ 2)(3)$ or
$(1 \ 3)(2)$ or $(2 \ 3)(1)$).  We consider these as three colors, and color each strand of $L$ accordingly.
By assumption, $M$ is connected; hence not all generators correspond to the same
permutation.  Finally, the relators of the Wirtinger presentation guarantee that 
at each crossing wither all three color appear, or only one color does.  Thus we obtain a 3 coloring of the strands of $L$.

Montesinos proved that if we replace a positive crossing where {\it all three colors appear}
by 2 negative crossings the cover is not changed.  
This is called the {\it Montesinos move}.  The reason is simple:
the neighborhood of a 3-colored crossing is a ball, and its cover is a ball as well.  
(This is false if only one color appears at the crossing!)  More generally, when all three colors appear 
we can replace $n$ half twists with $n + 3k$ half twists ($n,k \in \mathbb Z$).
The case $n=0$ is allowed, but then we must require that the two strands in question have distinct colors.
We denote such a move by $n \mapsto n+3k$ Montesinos move.  
In Figure~\ref{fig:montesinos} we show a few views of the Montesinos Move.
\begin{figure}
\includegraphics{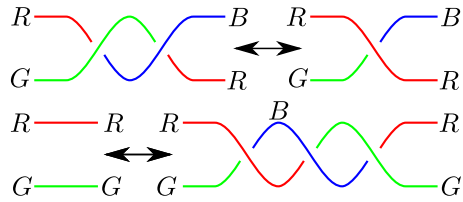}
\caption{Montesinos move}
\label{fig:montesinos}
\end{figure}

Finally, we record the following fact for future reference.
It is easy to see that the $p$-fold cover cover $f:M \to S^{3}$ branched along
$B \subset S^{3}$ is connected if and only if the image of $\pi_{1}(S^{3} \setminus B)$
in $S_{p}$ acts transitively on the set f $p$ letters.  For simple 3-fold covers this means:

\begin{lem}
\label{lem:connected3fold}
Let $M$ be a 3-manifold and $f:M \to S^{3}$ a simple 3-fold cover branched along the 
link $L \subset S^{3}$.  Then $M$ is connected if and only if at least two colors appear in
the 3-coloring of $L$. 
\end{lem}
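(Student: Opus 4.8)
The plan is to deduce the lemma directly from the transitivity criterion recorded immediately above its statement: the $3$-fold cover $f\colon M\to S^{3}$ branched along $L$ is connected if and only if the image $G$ of $\pi_{1}(S^{3}\setminus L)$ in $S_{3}$ acts transitively on $\{1,2,3\}$. Since the cover is simple of degree $3$, each Wirtinger generator of $S^{3}\setminus L$ maps to a transposition, and the three colors attached to the strands of $L$ are exactly the three transpositions $(1\ 2)$, $(1\ 3)$, $(2\ 3)$. Moreover $G$ is generated by precisely those transpositions that correspond to colors actually occurring on $L$. So everything reduces to an elementary statement about subgroups of $S_{3}$ generated by transpositions.

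For the ``only if'' direction I would argue contrapositively: if only one color appears, say every strand is colored $(1\ 2)$, then $G=\langle(1\ 2)\rangle$ fixes the letter $3$, so $G$ is not transitive and hence $M$ is disconnected (its number of components equals the number of $G$-orbits, here two). This also disposes of the degenerate case $L=\emptyset$, where $G$ is trivial and $M$ is the disjoint union of three copies of $S^{3}$. Equivalently: if $M$ is connected then $G$ is transitive, so $G$ is not contained in any point stabilizer, forcing at least two of the transpositions — hence at least two colors — to occur.

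For the ``if'' direction, suppose at least two colors appear, so $G$ contains two distinct transpositions of $S_{3}$. The product of two distinct transpositions in $S_{3}$ is a $3$-cycle, and a $3$-cycle together with any transposition generate all of $S_{3}$; hence $G=S_{3}$, which acts transitively on $\{1,2,3\}$, and so $M$ is connected.

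I do not anticipate any real obstacle: the argument is essentially a translation of the transitivity criterion into the language of colorings, the only substantive point being the trivial group-theoretic fact that two distinct transpositions generate $S_{3}$. One could instead phrase the whole proof purely in terms of the image of $\pi_{1}(S^{3}\setminus L)$ without reference to colors, but the coloring formulation is what makes the dichotomy ``one color versus two colors'' transparent, and it is the form in which the statement will be used later in the paper.
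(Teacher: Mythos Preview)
Your proof is correct and is exactly the argument the paper has in mind: the paper states the transitivity criterion immediately before the lemma and then records Lemma~\ref{lem:connected3fold} without proof, treating it as the obvious specialization to simple $3$-fold covers. Your write-up simply fills in the one-line group-theoretic check (two distinct transpositions in $S_3$ generate $S_3$, one transposition does not act transitively), which is precisely what the paper leaves to the reader.
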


\subsection{Slopes on tori and coverings}
\label{subsec:slopes}

Recall that s {\it slope} on a torus is the free homotopy class of a connected simple closed curve, 
up to reserving the orientation of the curve.  For this subsection we fix the following:
let $X$ and $E$ be complete hyperbolic manifolds of finite volume, and $\phi:X \to E$
an unbranched cover. Let $T$ be a boundary component of $X$; note that $\phi$
induces an unbranched cover $T \to \phi(T)$.

Let $\alpha$ be a slope on $T$ realized by a connected simple
closed curve $\gamma \subset T$.  Then $\phi(\gamma)$
is a (not necessarily simple) connected essential curve on $\phi(T)$.  Since $\phi(T)$ is a torus, there is a 
curve $\bar\beta$
on $\phi(T)$ so that $\phi(\gamma)$ is homotopic to $\bar\beta^{m}$, for some $m \neq 0$.
Let $\beta$ be the slope defined by $\bar\beta$.
Define the function $\phi_{\downarrow}$ from the slopes on $T$ to the slopes on $\phi(T)$ by
setting $\phi_{\downarrow}(\alpha) = \beta$.

Conversely, let $\alpha$ be a slope on $\phi(T)$ realized by a connected simple closed curve 
$\gamma \subset \phi(T)$.  Then $\phi^{-1}(\gamma)$ is a (not necessarily connected)
essential simple closed curve.  Each component of $\phi^{-1}(\gamma)$ defines a slope on $T$, 
and since these curves are disjoint, they
all define the same slope, say $\beta$.  Define the function $\phi_{\uparrow}$ from the slopes on 
$\phi(T)$ to the slopes on $T$ by setting  $\phi_{\uparrow}(\alpha) = \beta$.
It is easy to see that $\phi_{\downarrow}$ is the inverse of $\phi_{\uparrow}$.
We say that $\alpha$ and $\phi_{\downarrow}(\alpha)$ are {\it corresponding} slopes.

Suppose that we Dehn fill $T$ and $\phi(T)$.  If the slope filled are not corresponding, then the curve 
filled on $T$ maps to a curve of $\phi(T)$ that is not null homotopic in the attached solid torus.
Thus the map $\phi$ cannot be extended into that solid torus.  

Conversely, suppose that corresponding slopes are filled.  We parametrize the attached solid
tori as $S^{1} \times D^{2}$, and extend $\phi$ into the  solid tori by coning along each 
disk $\{p\} \times D^{2}$ ($p \in S^{1}$).  It is easy to see that the extended map is a cover,
branched (if at all) along the core of the attached solid torus.  (The local degree at the core of the
solid torus is the number denoted by $m$ in the construction of $\phi_{\downarrow}$ above.)

In conclusion, $\phi$ induces a correspondence between slopes of $T$ and 
slopes on $\phi(T)$, and $\phi$
can be extended to the attached solid tori to give a branched cover after 
Dehn filling if and only if corresponding slopes are filled.

Next, let $T_{1}$, $T_{2} \subset \partial X$ be tori that project to the same 
component of $\partial E$.  Then two bijections $\phi_{\downarrow}$ from
the slopes of $T_{1}$ and $T_{2}$ to the slopes of $\phi(T_{1}) = \phi(T_{2})$
induce a bijection between the slopes of $T_{1}$ and the slopes of
$T_{2}$; again we call slopes that are interchanged by this bijection
{\it corresponding}.  Filling $T_{1}$ and $T_{2}$ along corresponding slopes
is called {\it consistent}, {\it inconsistent} otherwise.  Note that 
after filling $X$ there is a filling of $E$ so that the cover $X \to E$
extends to a branched cover if and only if the filling of $X$ is consistent on
every pair of components of $\partial X$.

\subsection{Hyperbolic alternating links}  
\label{sebsection:alternating}

In this subsection we follow Chapter~4 of Lickorish \cite{lickorish}.
We begin with the following standard definitions:

\begin{dfns}
\label{dfn:alternating}
Let $L$ be a link and $D$ a diagram for $L$.  The projection sphere is
denoted $S^{2}$.  Then $D$ is called
{\it alternating} if, for each component $K$ of $L$, when traversing
the projection of $K$ the crossing occur as \dots over, under, over, 
under,\dots.  $L$ is called an {\it alternating link} if it admits an
alternating diagram.  A link diagram $D$ is called {\it strongly prime}
if any simple closed curve that intersects it transversely in two simple points
(that is, two points that are not crossings) bounds a disk that $D$ 
intersects in a single arc with no crossings.
A link $L$ is called {\it split} if its exterior admits an essential sphere,
that is, if there is an embedded sphere $S \subset S^3 \setminus L$ so that
each of the balls obtained by cutting $S^3$ open along $S$ contains 
at least one component of $L$.  A link diagram $D \subset S^2$ is called a {\it split
diagram} if there is a circle $\gamma$ embedded in $S^2$, so that each
disk obtained by cutting $S^2$ open along $\gamma$ contains at least
one component of $D$.  Note that a split diagram is necessarily a diagram
for a split link, but the converse does not hold.  A link is called
{\it simple} if its exterior does not admit an essential surface of non-negative
Euler characteristic.  A link $L$ is called {\it hyperbolic} if $S^3 \setminus L$
admits a complete, finite volume, hyperbolic metric.
\end{dfns}

Menasco (\cite{menasco}, see also~\cite{lickorish}) proved:

\begin{thm}
\label{thm:menasco}
Let $D$ be an alternating link diagram for a link $L$.  If $D$ is 
strongly prime and is not split, then $L$ is simple.
\end{thm}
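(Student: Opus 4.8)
The plan is to reproduce Menasco's argument (see \cite{menasco}, with an exposition in Chapter~4 of \cite{lickorish}), whose engine is an analysis of essential surfaces relative to the projection sphere. First I would put $L$ into \emph{Menasco position}: away from small balls centered at the crossings, $L$ lies on the projection sphere $S^{2}$, while at each crossing we insert a small ball (a ``bubble'') meeting $S^{2}$ in a disk, inside of which $L$ consists of an overarc running through the ball $B^{+}$ bounded by $S^{2}$ on one side and an underarc running through the ball $B^{-}$ on the other; thus $S^{3}=B^{+}\cup B^{-}$, glued along $S^{2}$ and decorated with finitely many bubbles. We may assume $D$ has at least one crossing (otherwise $L$ is the unknot). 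Suppose $L$ is not simple and let $F$ be an essential surface in the exterior of $L$ with $\chi(F)\geq 0$; performing a meridional compression if necessary, we may also assume $F$ is meridionally incompressible. The cases in which $F$ is a sphere or a disk will fall out of the main argument together with the conclusion that $D$ is split, so the substance is to rule out an essential annulus or torus.

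I would then put $F$ in general position with respect to $S^{2}$ and all the bubbles, and isotope it so as to minimize, in order, the number of components of $F\cap S^{2}$ and the number of intersections of $F$ with the bubbles. Routine innermost-disk arguments, using the irreducibility of $S^{3}\setminus L$ together with the incompressibility, boundary-incompressibility and meridional incompressibility of $F$, then put $F$ into the usual normal form: $F\cap S^{2}$ is a nonempty union of disjoint simple closed curves, none of which bounds a disk in $S^{2}$ that is either disjoint from $L$ or meets $L$ in exactly two points of a single strand; $F$ meets each bubble only in ``saddles''; and, symmetrically, $F^{\pm}:=F\cap B^{\pm}$ is incompressible in $B^{\pm}$ cut along the arcs of $L$ it contains.

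The heart of the proof, and the only place the \emph{alternating} hypothesis is used, is the following structural claim: in an alternating diagram the two overstrands at each bubble leave the bubble disk on prescribed sides, and exploiting this while analyzing an innermost curve of $F\cap S^{2}$ on $S^{2}$ together with the piece of $F^{+}$ it cobounds, one shows that every component of $F^{+}$ is a disk whose boundary meets $S^{2}$ in exactly one circle, and likewise for $F^{-}$. Granting this, $F$ is assembled from disks in $B^{+}$ and disks in $B^{-}$, glued to one another along the bands the saddles cut in the bubbles, and the curves of $F\cap S^{2}$ bound a nest of disks in $S^{2}$. A bookkeeping of Euler characteristics over this decomposition, read against the planar picture these curves make on the diagram, then forces one of the following: $F$ is a boundary-parallel torus; $F$ is a compressible or boundary-parallel annulus; some curve of $F\cap S^{2}$ separates $D$ into two nonempty pieces, so $D$ is a split diagram; or some curve of $F\cap S^{2}$ meets $L$ in two points and cuts off from $D$ a tangle containing a crossing, so $D$ is not strongly prime. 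Each alternative contradicts the hypotheses or the choice of $F$, so no such $F$ exists and $L$ is simple.

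The main obstacle is exactly this last, combinatorial step: proving that $F^{\pm}$ consists of meridian-free disks, and then converting the resulting global description of $F$ --- disks glued along saddle bands --- into a statement about the diagram on $S^{2}$ that precisely matches the definitions of ``split'' and ``strongly prime''. Alternation is indispensable here, as the argument genuinely fails for non-alternating diagrams, and the careful tracking of how the saddles in the bubbles connect the disk pieces, together with getting the Euler-characteristic count exactly right, is delicate; for this fine case analysis I would follow \cite{menasco} and the exposition in \cite{lickorish} rather than reproduce it in full.
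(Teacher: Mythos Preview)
The paper does not actually prove this theorem; it is quoted as a result of Menasco \cite{menasco} (with the reader also referred to Chapter~4 of \cite{lickorish}) and is used as a black box in Section~\ref{sec:dehn}. Your sketch is a faithful outline of Menasco's original argument, so in that sense it matches exactly what the paper cites.

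One caveat worth flagging, about the statement rather than your approach: as literally written, the theorem needs to exclude the $(2,n)$-torus links. Their standard diagrams are alternating, non-split, and strongly prime, yet their exteriors are Seifert fibered and contain essential annuli, so they are not simple. Menasco's actual conclusion is that a non-split prime alternating link is either a $(2,n)$-torus link or simple. In your outline this exception would emerge precisely in the ``bookkeeping of Euler characteristics'' step you defer to \cite{menasco} and \cite{lickorish}: when all the curves of $F\cap S^{2}$ run parallel through a single twist region, the annulus $F$ they assemble is genuinely essential. Since the links constructed in Section~\ref{sec:dehn} visibly have more than one twist region, the paper's application is unaffected, but if you intend to write out the argument you should be prepared for this case to survive.
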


Thurston proved:

\begin{thm}
\label{thm:thurston}
Any simple link is hyperbolic.
\end{thm}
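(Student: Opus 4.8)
The plan is to deduce this from Thurston's hyperbolization theorem for Haken manifolds; the actual content here is to show that the combinatorial notion ``simple'' is exactly what one needs in order to apply that theorem to a link exterior. Throughout, let $Y$ denote the compact exterior $S^{3}\setminus \mathrm{int}\,N(L)$, so that $S^{3}\setminus L=\mathrm{int}(Y)$ and $\partial Y$ is a non-empty union of tori. Thurston's theorem, in the form we use it, states: if $Y$ is a compact orientable Haken $3$-manifold whose boundary is a non-empty union of tori, and $Y$ is irreducible, atoroidal, and not Seifert fibered, then $\mathrm{int}(Y)$ admits a complete hyperbolic metric of finite volume. So it suffices to check that the exterior of a simple link satisfies these four hypotheses.

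First I would unpack simplicity. A two-sided essential surface of non-negative Euler characteristic is a sphere, a disk, a torus, or an annulus (one-sided surfaces being handled by passing to the boundary of a regular neighbourhood). Thus ``$L$ is simple'' means precisely: $Y$ admits no essential sphere, so $Y$ is irreducible (equivalently $L$ is non-split); $Y$ admits no essential disk, so $\partial Y$ is incompressible and $Y$ is not a solid torus; $Y$ admits no essential torus, so $Y$ is atoroidal; and $Y$ admits no essential annulus, so $Y$ is anannular. This already yields irreducibility and atoroidality.

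Second I would check that $Y$ is Haken. Since $L$ is non-split, $Y$ is irreducible; and $Y$ contains a minimal-genus Seifert surface for $L$, which is two-sided, properly embedded, incompressible, and not a disk (a Seifert disk would be a compressing disk for $\partial Y$), hence of non-positive Euler characteristic and in particular not a sphere. So $Y$ is Haken. (Alternatively, $H_{1}(Y)\cong\mathbb{Z}^{\#L}\ne 0$ forces $H_{2}(Y,\partial Y)\ne 0$, and maximally compressing a non-separating properly embedded surface produces the required incompressible surface.) Third, I would rule out that $Y$ is Seifert fibered: a compact orientable Seifert fibered $3$-manifold with non-empty boundary is either a solid torus, which contains an essential meridian disk, or else contains an essential vertical annulus, namely the preimage of a suitable essential arc in the base orbifold; both possibilities contradict simplicity. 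With all four hypotheses verified, Thurston's theorem applies and $S^{3}\setminus L$ is hyperbolic.

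The main obstacle is of course Thurston's hyperbolization theorem itself, which we invoke as a black box; the rest is standard $3$-manifold topology. The only genuine subtlety is matching conventions: the precise definition of ``essential surface'' (whether one-sided, $\partial$-parallel, or $\partial$-compressible surfaces are excluded) must be the one for which the equivalence ``simple $\iff$ irreducible, $\partial$-irreducible, atoroidal, anannular'' holds, but under any of the usual conventions the reduction above goes through after only cosmetic adjustments.
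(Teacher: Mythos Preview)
Your argument is correct and is exactly the standard reduction of this statement to Thurston's hyperbolization theorem for Haken manifolds with torus boundary. However, there is nothing to compare against: the paper does not give a proof of this theorem at all. It is simply stated as a result of Thurston and used as a black box (together with Menasco's theorem) to obtain Corollary~\ref{cor:hyperbolic}. In that sense you have supplied strictly more than the paper does: you have spelled out why ``simple'' in the sense of Definition~\ref{dfn:alternating} (no essential sphere, disk, torus, or annulus in the exterior) translates into the hypotheses ``irreducible, Haken, atoroidal, not Seifert fibered'' needed to invoke hyperbolization. Your handling of each step is fine; in particular, the observation that a Seifert fibered link exterior other than a solid torus must contain a vertical essential annulus is the right way to exclude the Seifert case, and the minimal-genus Seifert surface argument for Hakenness is standard. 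The caveat you flag at the end about conventions for ``essential'' is apt but, as you say, cosmetic.
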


Combining these results, we obtain:

\begin{cor}
\label{cor:hyperbolic}
If a link $L$ has a non-split, strongly prime, alternating diagram,
then $L$ is hyperbolic.
\end{cor}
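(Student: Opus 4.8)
The plan is to chain together the two results quoted immediately above; the corollary is a formal consequence of them and carries essentially no content of its own. Suppose $L$ admits a diagram $D$ that is alternating, strongly prime, and not split. First I would invoke Theorem~\ref{thm:menasco} (Menasco): its hypotheses are precisely that $D$ is an alternating diagram which is strongly prime and not split, so we may conclude that $L$ is simple, i.e.\ that $S^{3}\setminus L$ contains no essential surface of non-negative Euler characteristic.

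Next I would feed this conclusion into Theorem~\ref{thm:thurston} (Thurston): every simple link is hyperbolic, so $S^{3}\setminus L$ carries a complete, finite-volume hyperbolic metric. By the definition recorded in Definitions~\ref{dfn:alternating}, this is exactly the assertion that $L$ is hyperbolic, which is what Corollary~\ref{cor:hyperbolic} claims.

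There is no genuine obstacle here: all of the work lies inside the two cited theorems (Menasco's analysis of how incompressible and meridionally incompressible surfaces meet the projection sphere of an alternating diagram, and Thurston's hyperbolization). The only thing to verify is a matter of vocabulary, namely that ``non-split'' in the statement refers to the \emph{diagram} $D$ being a non-split diagram in the sense of Definitions~\ref{dfn:alternating}, which is exactly the hypothesis ``$D$ is not split'' appearing in Theorem~\ref{thm:menasco}; in particular no separate argument ruling out $L$ itself being a split link is needed, since a split link would admit an essential sphere and hence fail to be simple.
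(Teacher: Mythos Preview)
Your proposal is correct and matches the paper's approach exactly: the paper simply writes ``Combining these results, we obtain'' before stating the corollary, and your argument is precisely that combination---apply Theorem~\ref{thm:menasco} to conclude $L$ is simple, then Theorem~\ref{thm:thurston} to conclude $L$ is hyperbolic.
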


\subsection{Twist number and hyperbolic volume}
\label{subsec:twist}

For the definition of twist number see, for example,~\cite{lackenby}.
We briefly recall it here. 
Let $D$ be a link diagram.  Let $\sim$ be the equivalent relation on the 
crossings of $D$ generated by $c \sim c'$ if $c$ and $c'$ lie on the boundary
of a bigon of $D$.  This equivalence 
relation can be visualized as follows: if $c_{1},\dots,c_{n}$ form an 
equivalence class of crossings, then after reordering them if necessary,
there is a chain of $n-1$ bigons in $D$ with $c_{i-1}$ and $c_{i}$
on the boundary the $i$th bigon.

The {\it twist number} of a link $L$, denoted $t(L)$, is the 
smallest number of equivalence classes in any diagram for $L$.  
Thus, for example, the obvious diagram of
twist knots show they have twist number at most 2.

Lackenby~\cite{lackenby} gave upper and lower bounds on the hyperbolic
volume of link exteriors in terms of their twist number.  We emphasize that
the lower bound holds for alternating links (or, more precisely, for alternating diagrams), 
while the upper bound holds {\it for all links}.  
It is the upper bound that we will need in this work,
hence we need not assume the diagram alternates.  We will need:

\begin{thm}[Lackenby~\cite{lackenby}]
\label{lackenby}
There exists a constant $c$ so that for any hyperbolic link $L$,
$$\vol[S^{3} \setminus L] \leq c t(L).$$
\end{thm}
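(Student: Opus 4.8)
The plan is to follow the strategy of Agol and Thurston: augment $L$ so that its twisting is unwound into Dehn surgery instructions, bound the volume of the resulting augmented link complement linearly in the twist number via an ideal polyhedral decomposition, and then observe that Dehn filling cannot increase volume.

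First I would fix a diagram $D$ of $L$ realizing the twist number, so that $D$ has $t = t(L)$ twist regions $R_{1},\dots,R_{t}$. For each region $R_{j}$ I insert a \emph{crossing circle} $C_{j}$: an unknotted circle bounding a twice-punctured disk pierced by the two strands running through $R_{j}$. Since a full twist in $R_{j}$ can be absorbed by a homeomorphism of the complement of $C_{j}$, after removing all full twists I obtain a link $L'$, together with the circles $C_{j}$, in which each former twist region carries at most a single crossing. The key structural point is that $S^{3}\setminus L$ is recovered from $S^{3}\setminus L'$ by $1/n_{j}$ Dehn filling along each $C_{j}$, the filling slope reintroducing the $n_{j}$ full twists that were removed from $R_{j}$.

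Second, I would invoke monotonicity of volume under Dehn filling. Since $L$ is hyperbolic, $\vol[S^{3}\setminus L] = v_{3}\,\|S^{3}\setminus L\|$, and because Dehn filling does not increase the Gromov (simplicial) norm we get $\|S^{3}\setminus L\| \le \|S^{3}\setminus L'\|$; combined with geometrization this yields $\vol[S^{3}\setminus L] \le \vol[S^{3}\setminus L']$. Hence it suffices to bound $\vol[S^{3}\setminus L']$ linearly in $t$. Third, I would estimate $\vol[S^{3}\setminus L']$ through a polyhedral decomposition of the augmented complement: cutting along the projection sphere $S^{2}$ and the crossing disks bounded by the $C_{j}$ splits $S^{3}\setminus L'$ into two ideal polyhedra whose combinatorics (numbers of ideal vertices and faces) are controlled by the number of crossing circles together with the remaining crossings, both $O(t)$. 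Subdividing these polyhedra into ideal tetrahedra therefore produces $O(t)$ tetrahedra, and since every ideal tetrahedron in $\mathbb{H}^{3}$ has volume at most $v_{3}$, summing gives $\vol[S^{3}\setminus L'] \le c\,t$ for a universal constant $c$, which is the desired bound.

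The main obstacle is the third step: carefully establishing the ideal polyhedral decomposition of the augmented link complement and proving that the number of tetrahedra in a subdivision is genuinely \emph{linear} in $t$ rather than, say, quadratic. Secondary technical points are the standard preliminary reductions to a prime, twist-reduced, connected diagram so that $S^{3}\setminus L'$ is itself hyperbolic and the Dehn-filling inequality is applicable in its clean form; these I expect to handle by routine normalization of the diagram before augmenting.
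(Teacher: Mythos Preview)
The paper does not prove this statement at all: it is quoted as a known result of Lackenby \cite{lackenby} (with the upper bound sharpened in the appendix by Agol and D.~Thurston) and is used as a black box in the proof of Theorem~\ref{thm:dehn}. So there is no ``paper's own proof'' to compare against.

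Your sketch is essentially the Agol--D.~Thurston argument from that appendix, and the outline is sound. One comment: the second step as you phrased it is slightly circular, since you invoke Gromov norm and geometrization to get $\vol[S^{3}\setminus L] \le \vol[S^{3}\setminus L']$, but you do not actually need $S^{3}\setminus L'$ to be hyperbolic for the upper bound---a straightened ideal triangulation of $S^{3}\setminus L'$ with $O(t)$ tetrahedra already gives $\|S^{3}\setminus L\| \le \|S^{3}\setminus L'\| \le O(t)$ directly, and then $\vol[S^{3}\setminus L] = v_{3}\|S^{3}\setminus L\|$ since $L$ is hyperbolic by hypothesis. This also sidesteps your ``secondary technical point'' about making $L'$ hyperbolic, which is unnecessary for the upper bound. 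The genuinely delicate part, as you correctly identify, is the polyhedral decomposition and the linear tetrahedron count; that is where the real work in \cite{lackenby} lies.
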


\section{Variations}
\label{sec:variations}

In this section we discuss two variations of the link volume.  The first variation is obtained by replacing the
volume by another knot invariant (note that one can use any invariant with values in $\mathbb R_{\geq 0}$).
This variation was suggested by Ryan Blair.  Let $L \subset S^{3}$ be a link
and let $b(L)$ denote its bridge index.  We consider the complexity of  
$M \stackrel{p}{\to} (S^3,L)$ to be $p b(L)$.  Define $\mbox{pB}(M)$ to be the infimum of $pb(L)$, taken over all possible covers.

It is easy to see that the preimage 
of a bridge surface $S$ for $L$ is a Heegaard surface for $M$, say $\Sigma$.  
Since $S$ is a $2b$ punctured sphere, $\chi(S \setminus L) = 2-2b$.  Its preimage
has Euler characteristic $p(2-2b)$.  We obtain $\Sigma$ by adding some number of points, 
say $n \geq 0$.  Then $\chi(\Sigma) = 
p (2-2b) + n$.   Thus we get:
\begin{eqnarray*}
2g(\Sigma) - 2 & = & -\chi(\Sigma) \\
	& = & p(2b-2) - n \\
	& = & 2pb -(2p + n) \\
	& \leq  & 2pb - 2.
\end{eqnarray*}
Since $\mbox{pB}(M)$ is positive integer valued, the infimum is obtained.  By considering a cover that
realizes $\mbox{pB}(M)$, we obtain a surface $\Sigma$ so that $g(\Sigma) \leq \mbox{pB}(M)$.
Thus $g(M) \leq g(\Sigma) \leq \mbox{pB}(M)$.

The converse is highly non-trivial.  Given an arbitrary manifold $M$, Hilden~\cite{hilden}
constructed a $3$-fold cover $M \stackrel{3}{\to} (S^3,L)$.  The construction uses an arbitrary
Heegaard surface $\Sigma \subset M$.  
One feature of Hilden's construction is that $b(L) \leq 2g(\Sigma) + 2$.
Since $\Sigma$ was an arbitrary Heegaard surface, we may assume that
$g(\Sigma) = g(M)$.  Thus we see that $\mbox{pB}(M) \leq 6g(M) + 6$.  
Combining the inequalities we got we obtain:
 $$g(M) \leq \mbox{pB}(M) \leq 6g(M) + 6.$$
Thus we see that the Heegaard genus and $\mbox{pB}$ are equivalent.

\bigskip

\noindent Another variation, suggested by Kimihiko Motegi, is the {\it surgery volume}.  Given a manifold $M$, 
it is well known that $M$ is obtained by Dehn surgery on a link in $S^{3}$, say $L$.  By
Myers ~\cite{myers}, every compact 3-manifold admits a simple knot.  Applying this to
$S^3 \setminus N(L)$ we obtain a knot $K$ so that $L' = L \cup K$ is a hyperbolic link.
Since $M$ is obtained from $S^{3}$ via surgery along $L'$ (with the original surgery 
coefficients on $L$ and the trivial slope on $K$), we conclude that $M$ is obtained from
$S^{3}$ via surgery along a hyperbolic link.  The surgery volume of $M$ is then 
		$$\mbox{SurgVol}(M) = \inf \{ \vol[S^3 \setminus L] | M \mbox{ is obtained 
					 by surgery on }L, L\mbox{ is hyperbolic}\}.$$

Neumann and Zagier \cite{NeumannZagier} showed that if a hyperbolic manifold $N_1$ is
obtained by filling a hyperbolic manifold $N_2$, then $\vol[N_1] < \vol[N_2]$.  
Applying this in our setting (with $S^3 \setminus L'$ as $N_1$ and $M$ as $N_2$)
we see that for any hyperbolic manifold $M$, $\vol[M] \leq \mbox{SurgVol}(M).$

We note that there exists a function $f:(0,\infty) \to (0,\infty)$ so that any hyperbolic manifold $M$ is
obtained by surgery on a hyperbolic link $L \subset S^{3}$ with $\vol[S^{3} \setminus L] \leq f(\vol[M])$.
To see this, fix $V$ and let $X_{1},\dots,X_{n}$ be the set of parent manifolds of all hyperbolic
manifolds of volume at most $V$.
For each $X_{i}$ there is a link $L_{i}$ in $S^{3}$, so that $X_{i}$ is obtained by surgery on some
of the components of $L_{i}$ and drilling the rest.  Therefore, any hyperbolic manifold $M$ with volume
at most $V$ is obtained on surgery on some $L_{i}$ ($i=1,\dots,n$).  Set 
$$f(V) = \max_{i=1}^{n}\{\vol[S^{3} \setminus L_{i}]\}.$$
We get:
$$\vol[M] \leq \mbox{SurgVol}(M) \leq f(\vol[M]).$$
The surgery volume and the hyperbolic volume are equivalent if there is a {\it linear} function
$f$ as above; we do not know if this is the case.

\section{Basic facts and open questions}
\label{sec:basic}

Basic facts about the Link Volume:

\smallskip

\begin{description}
\item[The link volume is obtained] that is, for every $M$ there exists a cover $M \stackrel{p}{\to} (S^3,L)$ so that $\lv[M] = p \vol[S^{3} \setminus L]$.
	Recall that the link volume was defined as an infimum.  To see that there is a cover realizing it, we need to show that
	the infimum is obtained.   Fix a manifold $M$, and let $M \stackrel{p_{n}}{\to} (S^3,L_{n})$ be 
	a sequence of covers that approximates $\lv[M]$.
	By Cao--Meyerhoff~\cite{CaoMeyehoff}, for every $n$,
	$\vol[S^{3 }\setminus L_{n}] > 2$.  Hence for 
	large enough $n$, $p_{n} \leq \lv[M] / 2$; we see that there are only finitely many 
	values for $p_{n}$.  For any collection of covers
	$M \stackrel{d}{\to} (S^3,L_{i}')$ of fixed degree $d$, the infimum of 
	$\{d \vol[S^{3} \setminus L_{i}']\}$ is
	obtained, since the set of hyperbolic volumes is well-ordered.
	It follows that the link volume is realized by some cover in  $\{M \stackrel{p_{n}}{\to} (S^3,L_{n})\}$.
	\medskip
\item[The link volume is the volume of a link exterior] that is, for any $M$, there exists $\widetilde{L} \subset M$ so that $\lv[M] = 
	\mbox{Vol}(M \setminus \widetilde{L})$.  This follows easily from the previous
	point.  Let $M \stackrel{p}{\to} (S^3,L)$ be a cover realizing the
	link volume. Let $\wt L$ be the preimage of $L$.  Then the cover $M \to S^{3}$ induces a 
	cover $M \setminus \wt L \to S^{3} \setminus L$.  Since the cover $M\setminus \wt L \to S^{3} \setminus L$ 
	is not branched, we can
	lift the hyperbolic structure on $S^{3} \setminus L$ to $M \setminus \wt{L}$.  
	We obtain a complete finite volume hyperbolic structure on $M \setminus \wt L$
	of volume $p \vol[S^{3} \setminus ]L = \lv[M]$.
	\medskip
\item[The link volume is bigger than the volume] If $M$ is hyperbolic then $\vol[M] < \lv[M]$: this follows immediately from 
	the previous point and the fact the volume always goes down under Dehn filling~\cite{NeumannZagier}.
	\medskip
\item [The spectrum of link volumes is well ordered] it follows from the second point above that the spectrum of
	link volumes is a subset of the spectrum of hyperbolic volumes.
	Since the spectrum of hyperbolic volumes is well ordered, so are all of
	its subsets.
	\medskip
\item[The spectrum of link volumes is "small"] the reader can easily make sense of the claim the the spectrum of link volumes is 
	a very small subset of the spectrum of hyperbolic volume.  In fact, the spectrum of link
	volumes is a subset of the spectrum integral products of volumes of hyperbolic links in $S^{3}$.  However,
	it is not too small: there are infinitely many manifolds $M$ with $\lv[M] < 7.22\dots$.  Moreover,
	in \cite{JairYoav} Jair Remigio-Juarez and the first named author showed that there are infinitely
	many manifold of {\it the same} link volume, just under $7.22\dots$.  This is in sharp contrast to
	the hyperbolic volume function which is finite-to-one.
	\end{description}

For the remainder of this paper we will often use these facts without reference.

\medskip

Basic questions about the Link Volume include:

\smallskip

\begin{enumerate}
\item  Calculate $\lv[M]$.  It is not clear whether or not there exists an algorithm to
	calculate the link volume of a given manifold $M$.  This would involves some 
	questions about the set of links in $S^{3}$ that give rise to $M$ and appears 
	to be quite hard. 	
\item The following question was proposed by Hitoshi Murakami: if $N \stackrel{q}{\to} M$ 
is an unbranched cover then
  $\lv[N] \leq q \lv[M]$.  How good is this bound?  Even for $q=2$, the answer is not clear.	
\item Since the link volume is obtained, for every manifold $M$ there is a positive integer $d$
which is the smallest integer so that there exists a cover
$M \stackrel{d} \to (S^3,L)$  realizing $\lv[M]$.  What is $d$ and how does
it reflect the topology of $M$?  Can $d$ be arbitrarily large?  Is any positive integer
$d$ for some $M$?
\item Characterize the set $\{ \widetilde{L} \subset M| \exists M \to
  S^3$, branched over $L$, and $\widetilde{L}$ is the preimage of $L\}$.
  The link volume is, of course, the minimal volume of the manifolds in this set,
  and in this paper we concentrate on it.
  It is easy to see that there is no upper bound to the volumes of manifolds in this set.
  It may be interesting to try and characterize the elements of this set.  
\item Do there exist hyperbolic manifolds $M_1$, $M_2$ with $\vol[M_1] =
  \vol[M_2]$ and $\lv[M_1] \neq \lv[M_2]$? 
\item Similarly, do there exist hyperbolic manifolds $M_1$, $M_2$ with $\lv[M_1] =
  \lv[M_2]$ and $\vol[M_1] \neq \vol[M_2]$? We note that the examples of manifolds with the same
  link volume mentioned above are all Siefert fibered spaces.
\end{enumerate}

\section{Proof of Theorem~\ref{thm:jt}}
\label{sec:jt}

Fix $V>0$.  Fix $\mu>0$ a Margulis constant for $\mathbb{H}^{3}$ and $d>0$.  
(We remark that the constant $\Lambda$ that we obtain in this proof depends on these choices.)

Let $M$ be a manifold of $\lv[M] < V$.  Let $M \stackrel{p} \to (S^3,L)$ be a cover
realizing $\lv[M]$.
Denote the $d$ neighborhood of the $\mu$-thick part of $S^{3} \setminus L$
by $E_{L}$.  By construction, $E_{L}$ is obtained from
$S^{3} \setminus L$ by drilling out certain geodesics; by 
Kojima~\cite[Proposition~4]{kojima}, $E_{L}$ is hyperbolic.

Let $X_{\phi}$ denote the preimage of $E_{L}$ in $M$.
Then the cover $\phi:M \to S^{3}$ induces an unbranched
cover $\phi:X_{\phi} \to E_{L}$.  By lifting the hyperbolic 
structure from $E_{L}$ to $X_{\phi}$, we see that
 $X_{\phi}$ is a finite volume hyperbolic manifold.

By construction, the following diagram commutes (where vertical arrows
represent the covering projections and horizontal arrows represent
Dehn fillings):

\begin{center}
\begin{picture}(200,60)(0,0)
  \put(  0,  0){\makebox(0,0){$E_{L}$}}
  \put(  0,50){\makebox(0,0){$X_{\phi}$}}
  \put(  0, 40){\vector(0,-1){30}}   
  \put( 10,  0){\vector(1,0){75.5}}
  \put(100,  0){\makebox(0,0){$S^3,L$}}
  \put(100, 40){\vector(0,-1){30}}
  \put(8,28){\makebox(0,0){$/\phi$}}
  \put(108,28){\makebox(0,0){$/\phi$}}
  \put(100,50){\makebox(0,0){$M$}}
  \put( 10,50){\vector(1,0){80}}
\end{picture}
\end{center}

\bigskip

By J\o rgensen and Thurston (see, for example, \cite{KR}), there exists a constant
$\Lambda$ (depending on $\mu$ and $d$), so that for any complete, finite volume hyperbolic manifold $N$,
the $d$-neighborhood of the $\mu$-thick part of $N$ can be triangulated
using no more than $\Lambda \vol[N]$ tetrahedra.  Applying this to 
$N = S^{3} \setminus L$, since the $d$-neighborhood of the $\mu$-thick
part of $N$ is $E_{L}$, we see that $E_{L}$ can be triangulated using at most 
$\Lambda \vol[S^{3} \setminus L] = \Lambda \lv[M] / p < \Lambda V/p$ tetrahedra.

Since there are only finite many manifolds that can be triangulated using
at most $\Lambda V/p$ tetrahedra, there are only finitely many possibilities 
for $E_{L}$.

Lifting the triangulation from $E_{L}$ to $X_{\phi}$, we see that $X_{\phi}$
can be triangulated with at most $\Lambda \lv[M] < \Lambda V$ tetrahedra, and that 
$\phi:X_{\phi} \to E_{L}$ is simplicial.  This shows that there are only finitely many
possibilities for $X_{\phi}$ and $\phi$.  We denote them $\{\phi_{i}:X_{i} \to E_{i}\}_{i=1}^{n_{V}}$.
%\end{proof}

\section{The Link Volume and Dehn Filling}
\label{sec:dehn}

In this section we prove Theorem~\ref{thm:dehn}.  The proof is constructive and requires two 
elements, the first is Hilden's
construction of simple 3-fold covers of $S^{3}$, and the second is the results of Thurston and 
Menasco that show that an alternating link that ``looks like'' a hyperbolic link is in fact hyperbolic.  
For the latter, see Subsection~\ref{sebsection:alternating}.  We now explain the former.

In~\cite{hilden}, Hilden showed that any 3-manifold is the simple 3-fold cover of $S^{3}$.  
The crux of his proof is the construction,
for any $g$, of a 3-fold branched cover $p:V_{g} \to B$, where $V_{g}$ is the genus 
$g$ handlebody and $B$ is the 3-ball.  He then proves that any map 
$f:\partial V_{g} \to \partial V_{g}$ can be isotoped so as to commute with $p$.  
Thus $f$ induces a map $\bar f:\partial B \to \partial B$ so that the following diagram 
commutes (here the vertical arrows denote Hilden's covering projection):

\begin{center}
\begin{picture}(200,60)(0,0)
  \put(  0,  0){\makebox(0,0){$B$}}
  \put(  0,50){\makebox(0,0){$V_{g}$}}
  \put(  0, 40){\vector(0,-1){30}}   
  \put( 10,  0){\vector(1,0){75.5}}
  \put(100,  0){\makebox(0,0){$B$}}
  \put(100, 40){\vector(0,-1){30}}
  \put(50,10){\makebox(0,0){$\bar f$}}
  \put(50,60){\makebox(0,0){$f$}}
  \put(100,50){\makebox(0,0){$V_g$}}
  \put( 10,50){\vector(1,0){80}}
\end{picture}
\end{center}

\bigskip

Starting with a closed, orientable, connected 3-manifold $M$, Hildens uses a Heegaard splitting of
$M = V_g \cup_f V_g$; the construction above gives a map to $B \cup_{\bar f} B \cong S^3$.  
This is, in a nutshell, Hilden's construction of $M$ as a cover of $S^3$.

Our goal is using a similar construction to get a map from $X$.  Since $X$ has boundary it cannot 
branch cover $S^{3}$, and we must modify Hilden's
construction.  To that end, we first describe the cover $p:V_{g} \to B$ in detail.  
Let $S_{3g+2}$ be the $3g+2$ times punctured $S^2$, viewed as a $3g$-times punctured annulus.
Then $S_{3g+2} \times [-1,1]$ admits a symmetry of order two (rotation by $\pi$ about 
the $y$-axis) given by $(x,y,t) \mapsto (-x,y,-t)$, 
where $S_{3g+2}$ is embedded symmetrically in the $xy$-plane as shown in 
Figure~\ref{fig:annulus}.
\begin{figure}
\includegraphics{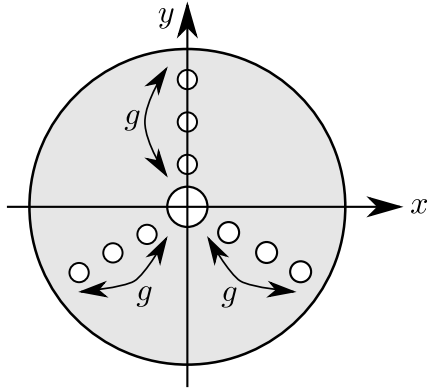}
\caption{$S_{3g+2}$ embedded in the $xy$-plane}
\label{fig:annulus}
\end{figure}

$S_{3g+2} \times [-1,1]$ 
also admits a symmetry of order 3 by rotating $S_{3g+2}$ about the origin of the 
$xy$-plane and fixing the $[-1,1]$ factor.  These two symmetries generate an action
of the dihedral group of order 6 on $S_{3g+2} \times [-1,1]$.  It is easy to see that the 
quotient is a ball.  On the other hand, the quotient of  $S_{3g+2} \times [-1,1]$ 
by the order two symmetry is $V_{g}$.  This induces the map $f:V_{g} \to B$; note that this
is a cover, branched along a trivial tangle with $g+2$ arcs (thus the branch set of
the map $M \to S^3$ 
described above is a $g+2$ bridge link, and the braiding is determined by $\bar f$).  This is 
Hilden's construction, see Figure~\ref{fig:annulus1}, where the branch set of $V_{g} \to B$
is indicated by dashed lines (in $B$).
\begin{figure}
\includegraphics{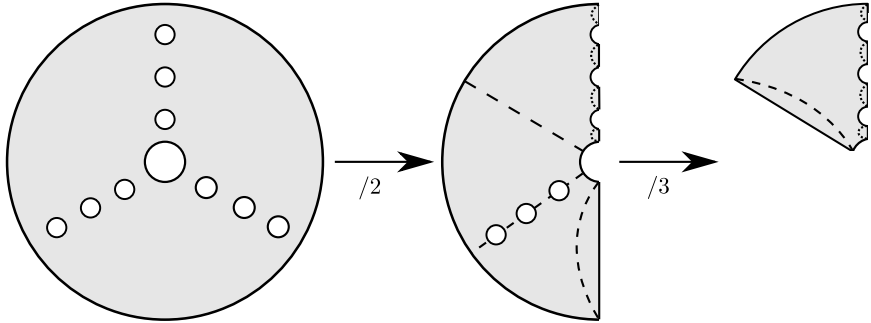}
\caption{Hilden's covers}
\label{fig:annulus1}
\end{figure}

A Heegaard splitting for the manifold with boundary $X$ is
a decomposition of $X$ into two compression bodies; we assume the reader is familiar 
with the basic definitions (see, for example, \cite{CassonGordon}).   We use the notation $V_{g,n}$
for a compression body with $\partial_{+} V_{g,n}$ a genus $g$ surface and 
$\partial_{-} V_{g,n}$ a collection of $n$ tori (so $0 \leq n \leq g$).   Since $\partial X$ consists
of $n$ tori, any Heegaard splitting of $X$ consists of two compression bodies of the form $V_{g,n_{1}}$ 
and $V_{g,n_{2}}$, for some $g,n_1,n_2$ with $n_{1} + n_{2} = n$.   We use the notation
$V_{g,n_{i}}^{*}$ for the manifold obtained by removing $n_{i}$ disjoint open balls from the interior of $V_{g,n_{i}}$.  
We use the notation $X^{*}$ for the manifold obtained by removing $n$ disjoint open balls from the 
interior of $X$.  Finally, we use the notation 
$B^{*}_{n_{i}}$ for the manifold obtained by removing $n_{i}$ disjoint open balls from the interior of $B$.  

Since compression bodies do not admit simple 3-fold branched covers of the type we need, we work 
with $V_{g,n_{i}}^{*}$, see Figure~\ref{fig:annulus2}. 
\begin{figure}
\includegraphics{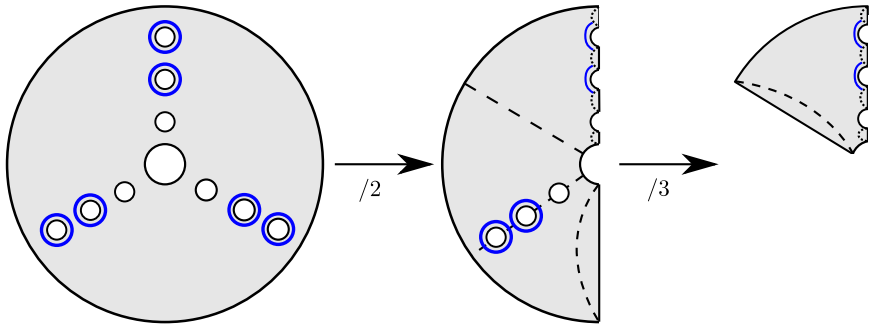}
\caption{Hilden's covers modified}
\label{fig:annulus2}
\end{figure}
Figure~\ref{fig:annulus2} is very similar to Figure~\ref{fig:annulus1}, 
but has a few ``decoration'' added in blue.
The circles added to $S_{3g+2} \times [-1,1]$ are embedded in 
$S_{3g+2} \times \{0\}$.  There are exactly $3n_{i}$ such circles.
Clearly, they are invariant under the dihedral group action, and their images in $V_{g}$ and $B$ are 
shown.  By removing an appropriate neighborhood of these circles and their images,
we get a simple 3-fold cover from $V_{g,n_{i}}^{*}$ to $B_{n_{i}}^{*}$.  

Applying Hilden's theorem to the gluing map 
$f:\partial_{+} V_{g,n_{1}} \to \partial_{+} V_{g,n_{2}}$, we obtained a map $\bar f:\partial B_{n_{1}} \to \partial B_{n_{2}}$.  
Clearly, downstairs we see the manifold obtained by removing $n_{1} + n_{2} = n$ open balls from $S^{3}$; 
we denote it by $S^{3,*}$.

Note that the branch set is a tangle (that is, a 1-manifold properly embedded in $S^{3,*}$) that intersects every 
sphere boundary component in exactly 4 points; we denote this branch set by $T$.  
Moreover, the preimage of each component of $\partial S^{3,*}$ 
consists of exactly two components: a torus that double covers it, and a sphere that projects to it homeomorphically.  
The map from the torus in $\partial X^{*}$ to the sphere in $S^{3,*}$ is the quotient under the well known hyperelliptic involution.

\bigskip

\noindent Hilden's construction, as adopted to our scenario, is the key to everything we do below.  We
sum up its main properties here:

\begin{prop}
\label{prop:hilden}
Let $X$ be a compact, orientable manifold with $\partial X$ consisting of $n$ tori.  Let $X^{*}$ be the manifold 
obtained  by removing $n$ open balls from the interior of $X$.  
Let $S^{3,*}$ be the manifold obtained by removing $n$ open balls from $S^{3}$.

Then there exists a simple 3-fold cover $p:X^{*} \to S^{3,*}$.  The branch set is a compact 1-manifold, denoted $T$, 
that intersects every boundary component of $S^{3,*}$
in exactly four points.  

The preimage of each component $S$ of $\partial S^{3,*}$ consists of one torus component of $\partial X$ that 
double covers $S$ via a hyperelliptic involution, and one sphere component of $\partial X^{*} \setminus \partial X$
that maps to $S$ homeomorphicaly.
\end{prop}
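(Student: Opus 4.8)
The plan is to assemble $p$ from the modified Hilden covers of compression bodies constructed above, using a Heegaard splitting of $X$ together with Hilden's equivariance theorem to glue them. First I would fix a Heegaard splitting $X = V_{g,n_1} \cup_f V_{g,n_2}$ into two compression bodies, with $n_1 + n_2 = n$, with $\partial_+ V_{g,n_1}$ and $\partial_+ V_{g,n_2}$ genus $g$ surfaces, and with $f\colon \partial_+ V_{g,n_1} \to \partial_+ V_{g,n_2}$ the gluing homeomorphism; such a splitting exists because $X$ is compact and orientable with $\partial X$ a union of $n$ tori (one may in fact take $n_1 = n$, $n_2 = 0$). Removing $n_i$ open balls from the interior of each side gives $X^* = V_{g,n_1}^* \cup_f V_{g,n_2}^*$, the two pieces now glued along their common genus $g$ boundary.

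Next I would invoke, for each $i$, the simple $3$-fold branched cover $p_i\colon V_{g,n_i}^* \to B^*_{n_i}$ described above, and record that the restriction of $p_i$ to $\partial_+ V_{g,n_i}^*$ is precisely Hilden's original genus $g$ surface branched cover onto $\partial B$, untouched by the removal of interior balls and by the decorating circles. Hilden's theorem then applies to $f$: it can be isotoped, by an isotopy supported near $\partial_+ V_{g,n_1}$ (hence away from $\partial_-$ and the removed balls, so that $X^*$ is unchanged up to homeomorphism), so that it commutes with $p_1$ and $p_2$ and therefore descends to a homeomorphism $\bar f\colon \partial B \to \partial B$. Gluing along $\bar f$, the $p_i$ fit together into a simple $3$-fold branched cover $p\colon X^* \to B^*_{n_1} \cup_{\bar f} B^*_{n_2}$; since any two $3$-balls glued along their boundary give $S^3$, the base is $(B \cup_{\bar f} B)$ with $n_1 + n_2 = n$ open balls removed, i.e.\ $S^{3,*}$.

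It remains to check the asserted properties, all of which are local and are inherited from the $p_i$. Simplicity is a pointwise condition on the branch set, so it passes from $p_1$ and $p_2$ to $p$. The branch set $T$ is the union of the two branch tangles in the $B^*_{n_i}$, matched along their endpoints on $\partial B$ (the matching is forced by the equivariance of $f$), so $T$ is a properly embedded compact $1$-manifold; near each boundary sphere $S$ of $S^{3,*}$ it consists of the four branch points of the hyperelliptic double cover, so $T$ meets $S$ in exactly four points. Finally, inspecting the dihedral action on the decorating circles shows that $p^{-1}(S)$ has exactly two components: a torus in $\partial X$ double covering $S$ via the hyperelliptic involution (branched over those four points) and a sphere in $\partial X^* \setminus \partial X$ mapped homeomorphically, which together account for the $2+1 = 3$ sheets.

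The step I expect to be the main obstacle is the appeal to Hilden's theorem: one must verify that passing from the handlebody $V_g$ to the punctured compression body $V_{g,n_i}^*$ leaves intact the genus $g$ surface cover to which Hilden's equivariance statement applies, and that the equivariant isotopy of the gluing map can be taken supported away from the torus and sphere boundary components, so that it genuinely produces our $X^*$ and not some other manifold. The remainder is bookkeeping with the explicit model $S_{3g+2}\times[-1,1]$ and its dihedral symmetry.
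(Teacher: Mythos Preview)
Your proposal is correct and follows essentially the same approach as the paper: the proposition is stated in the paper as a summary of the construction immediately preceding it, and that construction is exactly what you outline---take a Heegaard splitting of $X$ into compression bodies, use the modified Hilden covers $V_{g,n_i}^* \to B^*_{n_i}$ built from the decorated $S_{3g+2}\times[-1,1]$ model, apply Hilden's equivariance theorem to the gluing map on $\partial_+$, and read off the boundary behaviour from the dihedral model. Your explicit remark that the equivariant isotopy can be taken supported near $\partial_+$ (hence away from the tori and spheres) is a point the paper leaves implicit, so your write-up is if anything slightly more careful.
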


Recall that in Theorem~\ref{thm:dehn}, $X$ came equipped with a choice of meridian and longitude on each 
boundary component.  $S^{3,*}$ is naturally a subset of $S^{3}$.
We isotope $S^{3,*}$ in $S^{3}$ so that, after projecting it into the plane, the following 
conditions hold:
	\begin{enumerate}
	\item The balls removed from $S^{3}$ are denoted $\bar B_{i}$ ($i=1\dots,n$).  The projection of each $\bar B_{i}$
	is a round disk; these disks are denoted $B_i$, see Figure~\ref{fig:Bi}.
	\begin{figure}
	\includegraphics{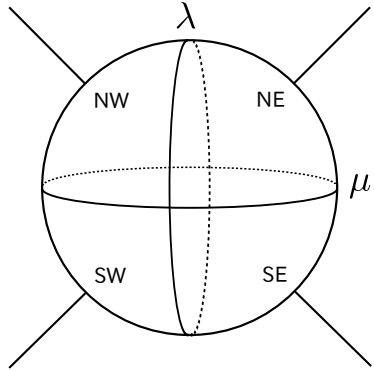}
	\caption{$T$ in a neiborhood of $B_{i}$}
	\label{fig:Bi}
	\end{figure}
	\item $T$ intersects each $B_i$ in exactly four points.  Each of these point is an endpoint of a  strand of $T$.  The four point are the intersection
	of the lines of slopes $\pm 1$ through the center of the disk with its boundary, and are labeled (in cyclic order) NE, SE, SW, and NW.
	\item We twist the boundary components of $S^{3,*}$ so that, in addition, the meridian and longitude of the 
	corresponding boundary component of $\partial X$ map to a horizontal and vertical circles, respectively;
	thses curves (slightly rounded) are labeled $\mu$ and $\lambda$ in Figure~\ref{fig:Bi}. 
	\end{enumerate}

Let $T_i \subset \partial X$ be the torus that projects to $\partial \bar B_i$.  Recall that by {\it Dehn filling} $T_i$ 
we mean attaching a solid torus $V$ to $T_i$.   $V$ is foliated by concentric tori, with one singular leaf (the core circle).  
To understand how the hyperelliptic involution extends from $\partial V = T_{i}$ into $V$ we construct the following
explicit model of the hyperelliptic involution:  let $T_{i}$ be the image of $\mathbb R^{2}$ under the action
of $\mathbb Z^{2}$ given by $(x,y) \mapsto (x+n,y+m)$.  Then the hyperelliptic involution is given by
rotation by $\pi$ about $(0,0)$.  The four fixed points on $T_{i}$ 
are the images of $(0,0)$, $(1/2,0)$ (rotate and translate by
$(x+1,y)$), $(0,1/2)$ (rotate and translate by $(x,y+1)$), and $(1/2,1/2)$ (rotate and translate by
$(x+1,y+1)$).  Given any slope $p/q$ (with $p$ and $q$ relatively prime), 
it is clear that the foliation of $\mathbb R^{2}$ by 
straight lines of slope $p/q$ is
invariant under the rotation by $\pi$ about $(0,0)$.  The line through $(0,0)$ goes through $(p/2,q/2)$,
which is the image of one of the other three fixed points, as not both $p$ and $q$ are even.  Similarly
for the lines through $(1/2,0)$, $(0,1/2)$, $(1/2,1/2)$; these lines project to two circles on the torus, with
exactly two fixed points on each circle.  By considering the images of the foliation of $\mathbb R^{2}$ by
lines of slope $p/q$, we obtain a foliation of $\partial V$ by circles (each representing the slope $p/q$).
In the foliation of $V$ by concentric tori, each torus admits such a foliation, and the length of the leaves
limit on $0$ as we approach the singular leaf.  At the limit, we see that the involution on the non-singular leaves 
induces an involution of the singular leaf whose image is an arc.  Thus the hyperelliptic involution of $T_{i}$
extends to an involution on $V$, whose image is foliated by spheres, with one singular leaf that is an arc.
The image of $V$ is a ball, and the branch set is a rational tangle of slope $p/q$;
for more about rational tangles and their double covers see, for example,~\cite{rolfsen}.
We denote this rational tangle by $R_{i}$.

\begin{notation}	
\label{notation:EquivalenceClasses}
We assume the rational tangles we study have been isotoped to be alternating (it is well known that this can be achieved).  
Two rational tangles are considered
{\it equivalent} if the following two conditions hold:
	\begin{enumerate} 
	\item  The over/under information of the strands of the rational tangles coming in from the NE are the same.
	Since the rational tangle is alternating, this implies that the over/under information from the other corners is the same as well.
	\item The strands of the rational tangles that start at NE end at the same point (SE, SW, or NW).
	\end{enumerate} 
Note that the crossing information is ill-defined for the two tangles $1/0$ and $0/1$, as they have no crossings.  
We arbitrarily choose an equivalent class for each of these tangle, so that the second condition is fulfilled.
We obtain $6^n$ possible equivalence classes (recall that $n = {|\partial X|}$). 
\end{notation}

\medskip

\noindent Given slopes on $T_{1},\dots,T_{n}$, we get rational 
tangles $R_{1},\dots,R_{n}$, as described above.
In each $\bar B_{i}$ we place a rational tangle, denoted $\widehat R_{i}$,
so that $\widehat R_{i} \in \{\pm 1, \pm 2, \pm 1/2 \}$, 
representing the same equivalence class as $R_{i}$.  
We assume that their projections into $B_{i}$ are as in Figure~\ref{fig:onetwohalf}.
We thus obtain a link, denoted $\widehat T$, and a diagram for $\widehat T$,
denoted $\widehat D$.  Since $\widehat T$ and $\widehat D$ only depend on the
equivalence classes of the slopes, when considering all possible slopes,
we obtain finitely many links and diagrams (specifically, $6^{n}$). 

In order to obtain hyperbolic branch set, we will, eventually, apply Mensaco~\cite{menasco} 
as explained in Subsection~\ref{sebsection:alternating}.
To that end we will need to make the branch set alternating.  
As we shall see below, we do this using a $1 \to -2$ and $-2 \to 1$ Montesinos
moves; these moves can be used to make the link alternating in a way that
is very similar to crossing changes.  Below we will show that we can apply
Montesinos moves to $T$, however, we may not apply these moves to the rational
tangles inside $B_{i}$.  This causes the following trouble: let $\alpha \subset T$ be an interval 
connecting two punctures, say $\partial \bar B_{i}$ and $\partial \bar B_{i'}$ (possibly, $i=i'$). 
Assume that the last crossing of $R_{i}$ before $\alpha$ is an over crossing, and  
that the number of crossings along $\alpha$ is even.  
Then if we make $T$ alternate, the last crossing along $\alpha$ will be an overcrossing.
This means that the first crossing of $R_{i'}$ after $\alpha$ must be an undercrossing.  
This may or may not be the case, and we have no control over it.

In order to encode this, we consider the following graph $\Gamma$:  $\Gamma$ has $n$ vertices, and they
correspond to $B_{1},\dots,B_{n}$.  The edges of $\Gamma$ correspond to intervals of $T$ 
that connect $B_{i}$ to $B_{i'}$ (again, $i$ and $i'$ may not be distinct).
Inspired by the discussion above, we assign signs to the edges of $\Gamma$ as follows (in essence,
good edges get a $+$ and bad edges get a $-$):
	\begin{enumerate}
	\label{SignsOfEdges}
	\item Let $I \subset T$ be an interval 
	connecting $B_{i}$ to $B_{i'}$ (possibly $i=i'$)
	so that the last
	crossing before $I$ and the first crossing after $I$ are the same (that is,
	both overscrossings or are both undercrossings),
	and the number of crossings along $I$ is odd.  Then the corresponding 
	edge get the sign $+$.
	\item Let $I \subset T$ be an interval 
		connecting $B_{i}$ to $B_{i'}$ (possibly $i=i'$)
		so that the last
	crossing before $I$ and the first crossing after $I$ are the opposite (that is,
	one is an overcrossing and one an undercrossing),
	and the number of crossings along $I$ is even.  Then the corresponding 
	edge get the sign $+$.
	\item All other edges get the sign $-$.
	\end{enumerate}

If $\Gamma$ is connected, we pick a spanning tree $\widehat \Gamma$ for $\Gamma$.  That is, $\widehat \Gamma$ is a 
tree obtained from $\Gamma$ by removing edges, so that every vertex of $\Gamma$ is adjacent to some edge of 
$\widehat \Gamma$.  In general, we take $\widehat \Gamma$ to be 
a {\it maximal forest} in $\Gamma$.  A forrest is a collection of trees, or a graph without cycles.  
A {\it maximal forest} in $\Gamma$ is a graph obtained
from $\Gamma$ by removing a minimal (with respect to inclusion) set of edges so that a forrest is obtained; equivalently,
it is the union of maximal trees for the connected components of $\Gamma$.  Clearly a 
maximal forest $\widehat\Gamma$ has the following two properties: first,
$\widehat\Gamma$ contains no cycles.  
Second, any edge from $\Gamma$ that we add to $\widehat \Gamma$ closes a cycle.

\begin{lem}
\label{lemma:VertexSigns}
There is a sign assignment to the vertices of $\widehat \Gamma$, so that an edge of 
$\widehat \Gamma$ has a plus sign if and only if the vertices it connects
have the same sign.
\end{lem}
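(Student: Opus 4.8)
The plan is to induct on the number of edges of $\widehat\Gamma$, using that $\widehat\Gamma$ is a forest. The statement is really about two-colorings of the vertices of a forest respecting a prescribed edge-parity, so the natural approach is to build the coloring one vertex at a time along the tree structure, and the absence of cycles is exactly what removes any possibility of inconsistency.

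First I would reduce to the case that $\widehat\Gamma$ is a single tree: a maximal forest is a disjoint union of trees on the connected components of $\Gamma$, and a sign assignment can be chosen independently on each component, so it suffices to treat one tree $\Theta$. Next, fix a root vertex $v_0$ of $\Theta$ and assign it, say, the sign $+$. Every other vertex $v$ is joined to $v_0$ by a unique path in $\Theta$ (here is where acyclicity is used); I would define the sign of $v$ to be $+$ if the number of $-$ edges along that path is even, and $-$ if it is odd. Equivalently, process the vertices in order of increasing distance from $v_0$: when we reach a vertex $v$ whose parent $u$ has already been signed, along the edge $e = uv$ we set $\mathrm{sgn}(v) = \mathrm{sgn}(u)$ if $e$ is a $+$ edge and $\mathrm{sgn}(v) = -\mathrm{sgn}(u)$ if $e$ is a $-$ edge. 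This is well defined precisely because each non-root vertex has a unique parent in a tree.

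It then remains to check that \emph{every} edge of $\Theta$ — not just the tree-edges used in the construction, but there are none others since $\Theta$ is a tree — satisfies the required condition. For a tree-edge $e = uv$ with $u$ the parent of $v$: if $e$ is a $+$ edge then by construction $\mathrm{sgn}(v) = \mathrm{sgn}(u)$, so the endpoints have equal signs; if $e$ is a $-$ edge then $\mathrm{sgn}(v) = -\mathrm{sgn}(u)$, so the endpoints have opposite signs. Since every edge of the tree $\Theta$ is a tree-edge, this verifies the claim for $\Theta$, and assembling the components gives the statement for $\widehat\Gamma$.

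I do not expect a serious obstacle here; the only point requiring care is the well-definedness of the propagation, which is guaranteed by the two listed properties of a maximal forest — $\widehat\Gamma$ contains no cycles, so paths between vertices are unique and the inductive assignment never has to reconcile two different demands on the same vertex. (If $\widehat\Gamma$ had a cycle, the sign condition would force the product of the edge-signs around the cycle to be $+$, which need not hold — this is exactly why the construction passes to a maximal forest rather than working with $\Gamma$ directly.)
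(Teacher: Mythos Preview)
Your argument is correct and essentially the same as the paper's: both build the sign assignment inductively along the tree structure, using acyclicity to avoid conflicts. The paper phrases this as induction on the number of edges via removing a leaf and then assigning its sign last, while you phrase it as rooting each tree and propagating signs outward; these are two presentations of the same idea.
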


\begin{proof}[Proof of Lemma~\ref{lemma:VertexSigns}]
We induct on the number of edges in $\widehat \Gamma$.  If there are no edges there is nothing to prove.

Assume there are edges.  In that case at least one component of $\widehat \Gamma$ is a tree with more that one vertex.  
Such a tree must have a leaf, say $v$.  
Remove $v$ and $e$, the unique edge of $\widehat \Gamma$ connected to $v$.  
By induction, there is a sign assignment for the remaining 
vertices fulfilling the conditions of the lemma.  We now add $v$ and $e$.  Clearly, we can give $v$ a 
sign so that the condition of the lemma holds 
for $e$.  The lemma follows.
\end{proof}

We now isotope $\widehat T$ and accordingly modify 
$\widehat D$ as shown in Figure~\ref{that connectfig:YamashitaMove}
\begin{figure}
\includegraphics{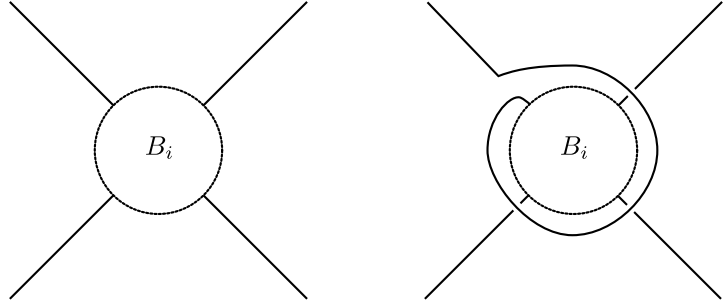}
\label{that connectfig:YamashitaMove}
\caption{Modifying $\widehat D$ near $B_{i}$ (negative sign)}
\end{figure}
at each puncture that corresponds to a vertex with a minus sign.  
Since this changes the number of crossings on some of strands of $\Gamma$, we recalculate the signs on the corresponding
edges.  Note that the isotopy above adds one or three
crossing to every strand of $T$ that corresponds to an edge of $\widehat \Gamma$ with sign $-$, 
and zero, two, four, or six crossings to 
every strand of $T$ that corresponds to an edge with sign $+$.  We easily conclude that
every edge of $\widehat \Gamma$ has sign $+$.  Moreover:

\begin{lem}
\label{lemma:SingsOfEdges}
Every edge of $\Gamma$ has sign $+$.
\end{lem}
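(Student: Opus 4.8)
The plan is to deduce the lemma from one combinatorial fact about $\Gamma$: that the sign function on its edges is the coboundary of a function on its vertices. Once that is in hand, the lemma follows from the discussion immediately preceding it together with Lemma~\ref{lemma:VertexSigns}.

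First, a reduction. The local move near a puncture (the one drawn just before the statement) is performed entirely outside the balls $\bar B_{i}$, so it does not touch the tangles $\widehat R_{i}$; hence it changes neither the ``last crossing before'' nor the ``first crossing after'' any arc $I_{e}$, and only alters the number of crossings \emph{along} the arcs, adding an odd number of crossings to each of the (at most four) strand-ends meeting the chosen puncture. Performing the move at every vertex $v$ with $s(v)=-$ (where $s$ is the assignment of Lemma~\ref{lemma:VertexSigns}) therefore changes the parity of the number of crossings along $I_{e}=I_{(u,w)}$ by $[s(u)=-]+[s(w)=-]\pmod 2$, which is $1$ exactly when $s$ takes different values on $u$ and $w$; this is the ``one or three'' / ``zero, two, four, or six'' observation used before the statement, now applied to \emph{all} edges of $\Gamma$ rather than only to those of $\widehat\Gamma$. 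Since $\mathrm{sign}(e)$ depends only on that parity and on whether the two adjacent in-ball crossings coincide, it follows that after the moves $\mathrm{sign}(e)=+$ for every $e$ \emph{provided that}, before the moves, $\mathrm{sign}(e)=+$ if and only if $s$ agrees on the endpoints of $e$. For edges of $\widehat\Gamma$ this is exactly the defining property of $s$; so the task is to prove it for the edges of $\Gamma\setminus\widehat\Gamma$, equivalently to prove that $\prod_{e\in C}\mathrm{sign}(e)=+$ for every cycle $C$ of $\Gamma$ — for then telescoping $\mathrm{sign}$ along the unique $\widehat\Gamma$-path joining the endpoints of a non-forest edge finishes it, every edge of that path being in $\widehat\Gamma$.

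To show that $\mathrm{sign}$ is a coboundary I would fix, on each connected component of the underlying $4$-valent plane graph of the diagram $\widehat D$ (before the moves), an \emph{alternating} diagram $D_{\mathrm{alt}}$ with that same underlying graph — one exists, by checkerboard colouring — and let $F$ be the set of crossings at which $\widehat D$ disagrees with $D_{\mathrm{alt}}$. There are two steps. Step (a): for an edge $e$ with arc $I_{e}$ whose ends are adjacent to the in-ball crossings $c_{b},c_{a}$, one has $\mathrm{sign}(e)=+$ if and only if $c_{b}$ and $c_{a}$ lie on the same side of $F$ (both in, or both out); indeed, running a mod-$2$ count along the strand $c_{b},x_{1},\dots,x_{m_{e}},c_{a}$ and using that $D_{\mathrm{alt}}$ alternates identifies $[c_{b}\in F]+[c_{a}\in F]$ with the parity of $(\#\{\text{crossings along }I_{e}\})+1+[c_{b}\neq c_{a}]$, which vanishes exactly when $\mathrm{sign}(e)=+$. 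Step (b): all crossings inside a given $\widehat R_{i}$ lie on the same side of $F$, because $\widehat D$ restricted to $\widehat R_{i}$ is alternating (as $\widehat R_{i}\in\{\pm1,\pm2,\pm 1/2\}$), $D_{\mathrm{alt}}$ restricted to $\widehat R_{i}$ is alternating as well, and on the connected underlying graph of such a small tangle two alternating diagrams either agree at every crossing or differ at every crossing. Setting $\phi(B_{i})\in\mathbb{Z}/2$ to be the common $F$-status of the crossings of $\widehat R_{i}$, steps (a) and (b) give $\mathrm{sign}(e)=+\iff\phi$ takes the same value on the two endpoints of $e$, for \emph{every} edge $e$; that is, $\mathrm{sign}=\delta\phi$, which is what we wanted.

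I expect step (a) to be the main obstacle: the definition of $\mathrm{sign}(e)$ is phrased in the somewhat ad hoc language of ``parity of the number of crossings along $I_{e}$'' and ``whether the two neighbouring crossings coincide'', and one must match it carefully with the clean global quantity ``membership in $F$'', being careful to count crossings along the strand with multiplicity and to check that $c_{b},c_{a}$ are genuinely crossings of the alternating tangles $\widehat R_{i}$. Step (b), the existence of $D_{\mathrm{alt}}$, and the bookkeeping in the reduction are all routine once (a) is settled.
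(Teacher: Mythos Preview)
Your argument is correct. Both your proof and the paper's reduce the claim to the statement that the product of the edge-signs around every cycle of $\Gamma$ equals $+$, and then invoke maximality of $\widehat\Gamma$ together with Lemma~\ref{lemma:VertexSigns}. The difference lies in how that cycle statement is established. The paper gives only a hint: from a putative cycle with sign-product $-1$ one is to build a closed (not necessarily simple) curve in $S^{2}$ meeting $\widehat T$ transversely an odd number of times, contradicting the Jordan Curve Theorem---this is explicitly modelled on the classical fact that any link projection can be made alternating by crossing changes. You instead make that classical fact do the work directly: you fix an alternating reference diagram $D_{\mathrm{alt}}$ on the same underlying $4$-valent planar graph, identify $\mathrm{sign}(e)$ with the $\mathbb{Z}/2$-coboundary of the $F$-status of the adjacent in-ball crossings (your step~(a), which is a clean parity computation and not the obstacle you fear), and then use that each $\widehat R_{i}$ is itself alternating and connected to make $F$-status constant on each $B_{i}$ (step~(b)). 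The two routes are cousins---the existence of $D_{\mathrm{alt}}$ via checkerboard colouring is itself a Jordan-type fact---but yours is fully explicit and avoids having to describe the closed curve in $S^{2}$ and its intersections with $\widehat T$ inside the $B_{i}$'s, which the paper leaves entirely to the reader. Your reduction paragraph (tracking how the local move affects parities at all edges, not just forest edges) is also more careful than the paper's one-sentence treatment.
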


\begin{proof}
The proof is very similar to the proof that every link projection can be made into an alternating link projection via crossing change and is 
left for the reader, with the following hint: suppose there exists an edge, say $e$,
whose sign is $-$.  Since we used a maximal forest, there is a 
cycle in $\Gamma$ (say $e_{1},\dots,e_{k}$) so that $e_{1} = e$ and $e_{i}$ belongs to the maximal forest for $i>1$; in particular,
exactly one edge of the cycle has sign $-$.  Use this cycle to produce a closed curve (not necessarily simple)
in $S^{2}$ that intersects he link $\widehat T$ transversely an odd number of times.  This is absurd, in light of the Jordan Curve Theorem.
\end{proof}

Next we prove that $X^{*}$ can be obtained as a 3-fold cover of $S^{3,*}$
with a particularly nice branch set. 
We begin with $T$, $\widehat T$, and $\widehat D$ described above; their properties are
summed up in Condition~(1) of Lemma~\ref{lem:hyperbolicity} below.
For parts of this argument {\it cf.} Blair~\cite{blair}.  Recall Definitions~\ref{dfn:alternating} 
for standard terms in knot theory.

\begin{lem}
\label{lem:hyperbolicity}
There exists a link $\widehat T$ in $S^{3}$, with projection into $S^{2}$ denoted $\widehat D$, 
so that $X^{*}$ is a simple, $3$-fold cover of 
$S^{3} \setminus \cup_{i=1}^{n} \mbox{\rm int}(\bar B_{i})$ branched along the tangle 
$T = \widehat T \cap (S^{3} \setminus \cup_{i} \mbox{\rm int}(B_{i}))$ 
and the following conditions hold:  
	\begin{enumerate}
	\item $\widehat T \cap \bar B_{i} = \widehat R_{i}$ (recall that $\widehat R_{i}$ projects into 
	$B_{i}$ as shown in Figure~\ref{fig:onetwohalf}), 
	\begin{figure}
	\includegraphics{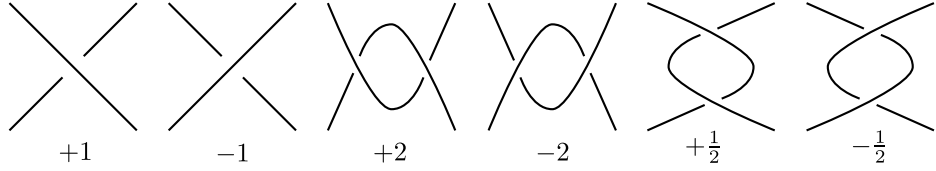}
	\caption{$\hat R_{i}$, the representatives for equivalence classes}
	\label{fig:onetwohalf}
	\end{figure}
	the projection of $\mbox{int}(T \setminus \widehat T)$ is disjoint from $\cup_{i=1}^{n}B_{i}$,
	and the meridian and longitude of $T_{i} \subset \partial X$ project to horizontal and vertical circles about $B_{i}$
	(respectively, recall Figure~\ref{fig:Bi}).
	\item $\widehat D$ is not a split diagram.
	\item Every simple closed curve in $S^{2} \setminus \cup B_{i}$ that intersects $\widehat D$
	transversely in two simple point bounds a disk that intersects $\widehat D$ in a single arc with no
	crossing. 
	\item Let $\alpha \subset S^{2}$ be an arc with one endpoint on $B_{i'}$  and the other on $B_{i''}$ 
	(for $i', \ i'' =1,\dots,n$, possibly $i'=i''$), and $\mbox{\rm int}(\alpha) \cap (\cup_{i} B_{i}) = \emptyset$.
	Then one of the following conditions holds:
		\begin{enumerate}
		\item $i' = i''$, and $\alpha$ cobounds a disk with $\partial B_{i}$ with
		no crossings.
		\item $|\mbox{int}(\alpha) \cap \widehat D| > 2$.
		\end{enumerate} 
	\item In the three coloring of $\widehat D \cap (S^{2} \setminus \cup_{i=1}^{n} B_{i})$ induced by the 
	cover $X^{*} \to S^{3,*}$, every crossing is three colored.
	\item $\widehat D$ is alternating.
	\item $\widehat T$ is a knot.
	\end{enumerate}
\end{lem}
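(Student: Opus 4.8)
The plan is to manufacture $\widehat T$ from Hilden's construction and then repair the diagram by Montesinos moves, keeping the $3$-colouring under control throughout. I would start with the simple $3$-fold cover $p\colon X^{*}\to S^{3,*}$ of Proposition~\ref{prop:hilden}, with branch tangle $T$, and insert into each $\bar B_{i}$ the rational tangle $\widehat R_{i}$ in the position fixed in the set-up preceding the statement (Figures~\ref{fig:Bi} and~\ref{fig:onetwohalf}); this produces $\widehat T$, $\widehat D$ and is precisely Condition~(1). Since $p$ is a simple $3$-fold cover, the Wirtinger generators of $T$ carry transpositions in $S_{3}$, which $3$-colours the strands of $\widehat D$ off the balls, and at every such crossing all three colours appear or only one does; by Lemma~\ref{lem:connected3fold}, connectedness of $X^{*}$ forces at least two colours to occur.

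Next I would secure Condition~(5). Inspecting Hilden's construction --- the decorated annular pieces of Figures~\ref{fig:annulus1} and~\ref{fig:annulus2} --- one checks that every crossing of $T$ is three-coloured, and any monochromatic crossing that might appear is removed by a local isotopy pulling the two like-coloured arcs apart. The point is that every move made below preserves this: an $n\mapsto n+3k$ Montesinos move is applied only at a three-coloured crossing, and a one-line computation in $S_{3}$ (conjugation of transpositions) shows that each of the new crossings it creates is again three-coloured, and the same holds for the local moves near the minus-sign punctures introduced earlier. Hence once (5) holds it persists, and from this point on Montesinos moves may be applied freely to $T$, though never to the tangles inside the $\bar B_{i}$.

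With (5) available I would arrange (7), (2) and (6) simultaneously. First $\widehat T$ is made connected: since at least two colours occur, finitely many $0\mapsto 3$ Montesinos moves (applied between parallel strands of distinct colour) band the components into one --- this is the relative form of Hilden's realisation of $3$-manifolds over knots --- giving Condition~(7); a knot diagram has no separating circle, so Condition~(2) follows. To make $\widehat D$ alternating I would traverse the knot and correct each ``wrong'' crossing of $T$ by a $1\mapsto -2$ or $-2\mapsto 1$ Montesinos move, exactly as in the classical argument that every diagram alternates after crossing changes, performing the flips only on crossings of $T$. The only thing that could go wrong is that the over/under pattern $\widehat R_{i}$ imposes on an arc of $T$ leaving $B_{i}$ must match --- after adjusting the number of crossings by a multiple of $3$ (indeed, by keeping careful track of parities, by a multiple of $6$, which does not alter how the arc joins its neighbours, so connectivity is not destroyed) --- the alternating pattern; this is exactly the content of the sign calculus of Lemmas~\ref{lemma:VertexSigns} and~\ref{lemma:SingsOfEdges}. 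This gives Condition~(6).

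It remains to get the relative strong-primeness Conditions~(3) and~(4). I would pass to a diagram satisfying (1), (5), (6), (7) with the least number of crossings outside $\cup_{i}\bar B_{i}$, and argue by contradiction in Menasco's style ({\it cf.}~Blair~\cite{blair}): a violation of~(3) --- a circle meeting $\widehat D$ in two non-crossing points and bounding, off the balls, a disk hitting $\widehat D$ in more than one arc or with a crossing --- or of~(4) --- a short arc joining two balls --- produces an innermost disk or innermost arc along which the diagram can be simplified while preserving (1), (5), (6), (7), contradicting minimality; the finitely many degenerate configurations with no crossings outside the balls are excluded by inspection. This finishes the verification; with (2), (3), (6) in hand the companion link obtained by filling the $\bar B_{i}$ with the actual rational tangles will be hyperbolic by Corollary~\ref{cor:hyperbolic}, which is the eventual use of the lemma. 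I expect the main obstacles to be, first, the parity bookkeeping that makes (6) and (7) coexist with the prescribed tangles $\widehat R_{i}$, and second, this last minimality argument, which must simplify $\widehat D$ without ever cutting across the balls $\bar B_{i}$.
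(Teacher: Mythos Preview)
Your approach has the right shape --- start from Hilden, insert the $\widehat R_i$, repair by Montesinos moves --- but two steps do not go through as written.

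\textbf{Condition~(5).} You claim the crossings coming out of Hilden's construction are already three-coloured, or that a monochromatic one can be removed by ``a local isotopy pulling the two like-coloured arcs apart.'' Neither is justified: the crossings of $T$ arise from the braid $\bar f$ realising the Heegaard gluing, and nothing prevents two like-coloured strands from crossing essentially --- such a crossing cannot be isotoped away. The paper's fix is different in kind: one drags a \emph{third}, differently-coloured strand through the offending crossing along an arc $\alpha$ (Figure~\ref{fig:threecolors}). This is an ambient isotopy of $\widehat T$ that \emph{adds} crossings to $\widehat D$ and converts the monochromatic crossing into a three-coloured one; it is not a local simplification at the crossing.

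\textbf{Conditions~(3) and~(4) via minimality.} This is the more serious gap. Condition~(4) is a \emph{lower} bound: every arc joining two balls must meet $\widehat D$ more than twice. Minimising the crossing number pushes in exactly the wrong direction. After Hilden, two balls $B_{i'}$, $B_{i''}$ may well be joined directly by strands of $T$ with nothing between them; such a diagram can be made to satisfy (1), (5), (6), (7) with few crossings and still violate~(4). The paper obtains~(4) not by simplification but by \emph{adding} structure: an explicit isolation move (Figure~\ref{fig:localmove2}) that winds $T$ around each $B_i$ so any arc reaching $\partial B_i$ must traverse several strands. Only after this does the paper attack~(3), and again not by crossing-minimality but by reducing the specific finite complexity $\sum_{j<j'}\binom{n_{j,j'}}{2}$ of Equation~(\ref{equation:NumberOfCircles}) via the local isotopy of Figure~\ref{fig:localmove3}. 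Your ordering makes things worse: having arranged~(6) before~(3) and~(4), any repair you attempt must also preserve alternation, whereas the paper deliberately does~(2)--(5) first by plain isotopies (so the edge-sign calculus for~(6) survives, Remark~\ref{rmk:signs}), then~(6), and only then~(7).
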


\begin{rmk}
\label{rmk:signs}
To obtain conditions~(1)--(5) we modify $\widehat T$ via isotopy; except for the move
shown in Figure~\ref{fig:localmove2}, 
the projection of the support of this isotopy is disjoint from $\cup_{i=1}^{n}B_{i}$.  
Note that in the move shown in Figure~\ref{fig:localmove2} each edge gets and even number
of crossings added.
Hence the signs of the edges of $\Gamma$ do not change,
and Lemma~\ref{lemma:SingsOfEdges} still holds after we obtain  
Conditions~(1)--(5).
(We use this lemma to obtain Condition~(6), and
never need it again after that.)  
\end{rmk}

\begin{proof}
{\bf Condition~(1).} Condition~(1) already holds.  We note that none of the moves applied
in the proof of this lemma changes this.  We will not refer to Condition~(1) explicitly.

\bigskip

\noindent {\bf Condition~(2).}  $\widehat D$ is diagrammatically split if and only if it is disconnected.
Suppose $\widehat D$ is disconnected, and let $K_j$ and $K_{j'}$ be components of $\widehat T$ that project
to distinct components of $\widehat D$.  
Let $\alpha \subset S^{2} \setminus \cup_{i=1}^{n} B_{i}$ be an embedded arc
with one endpoint on $K_{j}$ and the other on $K_{j'}$ (note that $K_{j}, \ K_{j'} \not\subset \cup_{i=1}^{n} B_{i}$, hence $\alpha$ exists;
$\alpha$ may intersect $\widehat D$ in its interior).  We perform an
{\it isotopy along $\alpha$}, as shown in Figure~\ref{fig:alphamove}.
\begin{figure}
\includegraphics{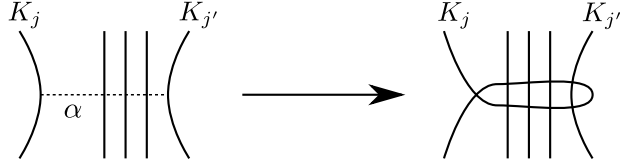}
\caption{Isotopy along $\alpha$}
\label{fig:alphamove}
\end{figure} 
After that $K_j$ crosses $K_{j'}$ outside $\cup_{i=1}^n B_i$; clearly, this reduces the number of components
of $\widehat D$.
Repeating this process if necessary, Condition~(2) is obtained.

\bigskip

\noindent  {\bf Condition~(3).}   For each $B_i$, let $N(B_i)$ be a normal neighborhood of $B_i$ so that
$\widehat D \cap N(B_i)$ consists of the tangle in $B_i$ and four short segments as in the left hand side of 
Figure~\ref{fig:localmove2}.  We assume further that for $i \neq j$, $N(B_i) \cap N(B_j) = \emptyset$.
\begin{figure}
\includegraphics{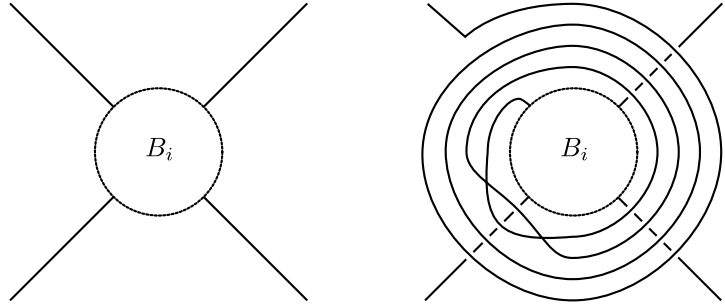}
\caption{Isolating $B_i$}
\label{fig:localmove2}
\end{figure}
Inside each $N(B_i)$ perform the isotopy shown in Figure~\ref{fig:localmove2}. 

Next we count the number of simple closed curves in $S^{2} \setminus \cup_{i} B_{i}$ that intersect $\widehat D$
in two points and do not bound a disk $\Delta$ with $\widehat D \cap \Delta$ is a single arc with no crossings.
These curves are counted up to ``diagrammatic isotopy'', that is, an isotopy via curves that are 
transverse to $\widehat D$ at all time and in  particular are disjoint from the crossings.
 
Let $C_1,\dots,C_k$ be the closures of the components of $S^2 \setminus (\widehat D \cup (\cup_i B_i))$.
Let $\gamma, \gamma' \subset S^2 \setminus \cup_i B_i$ be two simple closed curves that intersects 
$\widehat D$ transversely in two simple points.  Then $\widehat D$ cuts $\gamma$ into 2 arcs, say one in the region 
$C_j$ and one in $C_{j'}$.  Note that if $j=j'$, then $C_j$ is adjacent to
itself, and in particular there is a simple closed 
curve in $S^2$ that intersects $\widehat D$ transversely in one point, which is absurd.  
Condition~(2) (connectivity of $\widehat D$)
is equivalent to all regions being disks, and hence implies
that $\gamma$ and $\gamma'$ are diagrammatically isotopic if and only if both curves traverse  
the same regions $C_j$ and $C_{j'}$, and $\gamma \cap \partial C_j$ is contained in the same segments of 
$C_j \cap C_{j'}$ as $\gamma' \cap \partial C_{j'}$.  (See Figure~\ref{fig:regions};
\begin{figure}
\includegraphics{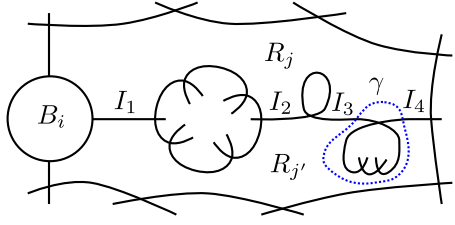}
\caption{Segments}
\label{fig:regions}
\end{figure}
here a {\it segment} means an interval $I \subset S^{2} \setminus \cup_{i} \mbox{int}(B_{i})$, so that
$I \subset C_j \cap C_{j'}$, $\partial I$ are crossings or lie on $\partial B_i$ for some $i$, and  
$I$ contains no crossings in its interior.) 
For any pair of regions $C_j$ and $C_{j'}$,  let $n_{j,j'}$ be the number of segments in $C_j \cap C_{j'}$ 
(for example, in Figure~\ref{fig:regions}, $n_{j,j'} = 4$).  
Then we see that the number
of simple closed curves that intersect $\widehat D$ in two simple points, 
traverse $C_j$ and $C_{j'}$, and do not bound a disk
containing a single arcs of $\widehat D$ (counted up to diagrammatic isotopy) 
is ${n_{j,j'}}\choose{2}$, where 
${0}\choose{2}$ and ${1}\choose{2}$ are naturally understood to be 0.
Hence the total number of such curves (counted up to diagrammatic isotopy) is 
\begin{equation}
\label{equation:NumberOfCircles}
\sum_{1 \leq j < j' \leq k}{n_{j,j'}\choose2}.
\end{equation}

Now assume that condition~(3) does not hold; then there exist regions 
$C_j$ and $C_{j'}$ with $n_{j,j'} \geq 2$.  Let $I$ be
an interval of $C_j \cap C_{j'}$.  Since we isolated $B_i$ (for all $i$) as 
shown in Figure~\ref{fig:localmove2}, the endpoints of $I$ cannot lie on 
$\partial B_i$ and must therefore both be crossings. The move shown in 
Figure~\ref{fig:localmove3} 
\begin{figure}
\includegraphics{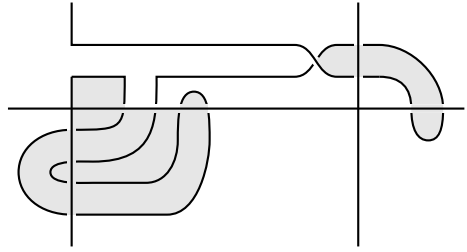}
\caption{Isotopy to reduce $C_{i,i'}$}
\label{fig:localmove3}
\end{figure}
reduces $n_{j,j'}$ by one.  
This move introduces several new regions, and those are shaded in 
Figure~\ref{fig:localmove3}. 
Inspecting Figure~\ref{fig:localmove3}, we see that
for any pair of regions $C_{j}$, $C_{j'}$ that existed prior to the move,
$n_{j,j'}$ does not increase, and for any pair
of regions $C_{j}$, $C_{j'}$  with at least one new region, $n_{j,j'}$
is 1 or 0.  Hence the sum in Equation~(\ref{equation:NumberOfCircles}) 
is reduced, and repeated application of this move 
yields a diagram $\widehat D$ for a link $\widehat T$ for which Condition~(3)
holds; by construction, Condition~(2) still holds.

\bigskip

\noindent  {\bf Condition~(4).}  Condition~(4) holds thanks to the isotopy performed in the 
previous step and shown in Figure~\ref{fig:localmove2}.

\bigskip

\noindent  {\bf Condition~(5).}  Since $\widehat D$ is the branch set of the simple 3-fold
cover $X^{*} \to S^{3,*}$ it inherits a 3-coloring as explained in Subsection~\ref{subsec:covers},
where the colors are transpositions in $S_{3}$.  Since $X^{*}$ is connected, at least
two colors appear in the coloring of $T$ (recall Lemma~\ref{lem:connected3fold}; that
lemma was stated for covers of $S^{3}$ but it is easy to see that it holds for covers of
$S^{3,*}$ as well).

Assume there exists a one colored crossing of $\widehat D$ outside $\cup_{i=1}^n B_i$, 
say $c$, and let $p$
be a point on a strand of $\widehat D$ that is of a different color than $c$, and so
that $p \not\in \cup_{i=1}^n B_i$.  Let $\alpha$ be an arc connecting $p$ and $c$
so that $\alpha \cap (\cup_{i=1}^n B_i) = \emptyset$.  If $\mbox{int}(\alpha)$ intersects 
a strand of $\widehat D$ whose color is different than the color of $c$, we cut
$\alpha$ short at that intersection.  Thus we may assume that any point
of $\mbox{int}(\alpha) \cap \widehat D$ has the same color as $c$.
We apply the following move (often used by Hilden, Montesinos and others), see Figure~\ref{fig:threecolors}.
\begin{figure}
\includegraphics{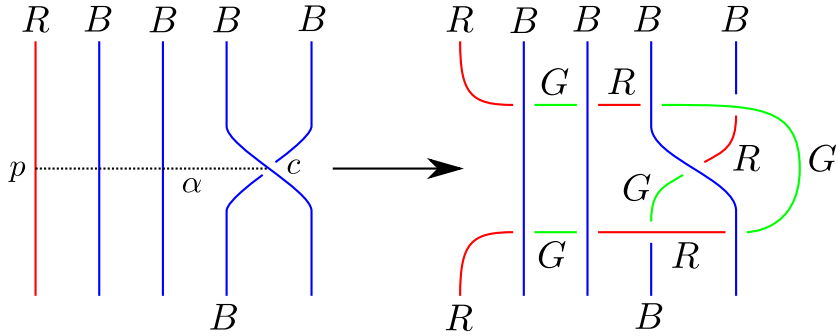}
\caption{Making the crossings 3-colored}
\label{fig:threecolors}
\end{figure}
This move reduces the number of one colored corssings outside $\cup_i B_i$,
and hence repeating this move gives Condition~(5).

We now verify that Conditions~(2)--(4) still hold.
Inspecting Figure~\ref{fig:threecolors}, we see that
Condition~(2), which is equivalent to connectivity of $\widehat D$, clearly holds.
A simple closed curves that intersects $\widehat D$ twice after this moves, intersects
it at most twice before the move.  By considering these curves and Figure~\ref{fig:threecolors}
we conclude that Condition~(3) holds as well (in checking this, note that $\mbox{int}\alpha \cap \widehat D$
maybe empty; to rule out one case, you need to use the coloring: a red arc cannot be connected
to a blue arc without a crossing).  
For each $i$, the preimage of $\partial \bar B_{i}$
is disconnected; hence the four segments of $\widehat D$ on the left side of
Figure~\ref{fig:localmove2} are all the same color.  Since $\widehat D$ is 
connected and has more than one color, is must have a three colored crossing, which
cannot be contained in $N(B_{i})$ for any $i$.  We can take the point $p$ in the construction
above to be a point near that three colored crossing, and in particular, we may assume that 
$p \not\in N(B_{i})$ for any $i$.  Therefore this move effects
$\widehat D \cap N(B_{i})$ by {\it adding} arcs that traverse $N(B_{i})$
without interscting $B_{i}$ itself, but not 
changing any of the existing diagram in the right hand side of 
Figure~\ref{fig:localmove2}.
Therefore Condition~(4) holds.

\bigskip

\noindent  {\bf Condition~(6).}  Note that the tangles $\widehat R_i$ are alternating ($i=1,\dots,n$). 
It is well known that any link projection can be made into an alternating projection by
reversing some of its crossings.  We 
mark the crossings of $\widehat D$ by $\pm$, marking a crossing $+$ if we do not need to reverse it and $-$ otherwise.
By reversing all the signs if necessary, we may assume that the signs in $B_{1}$ are $+$.  Since the signs of all the edges of
$\Gamma$ are $+$ (Lemma~\ref{lemma:SingsOfEdges} and Remark~\ref{rmk:signs}), the signs in every $B_{i}$ are all $+$.   Thus all the crossings
that are marked $-$ are outside $\cup_{i=1}^n B_{i}$, and hence three colored.  We change each of this crossing using 
the Montesinos move $+1 \mapsto -2$ or $-1 \mapsto +2$, as in the top row of Figure~\ref{fig:montesinos}, noting that this does not
change the double cover. 
It is clear that now $\widehat D$ is an alternating diagram fulfilling Conditions~(1)--(6).

\bigskip

\noindent  {\bf Condition~(7).}
Assume $T$ is a link.  If there is a crossing outside $\cup_{i} B_{i}$ that
corresponds to two distinct components of $T$, we perform a $+1 \mapsto +4$
or $-1 \mapsto -4$ Montesinos move; this reduces the number of components
of $T$.  Assume there is no such crossing, and let $\alpha$ be an arc connecting 
strands (say $s_1$ and $s_{2}$) that correspond to two distinct components of $T$.  
Since no $B_{i}$ contains a closed component, we may assume 
$\alpha \cap (\cup_{i} B_{i}) = \emptyset$; furthermore,
by truncating $\alpha$ if necessary,
we may assume that $\mbox{int}\alpha \cap \widehat D = \emptyset$.  
 By Condition~(4) at least one endpoint of
$s_{2}$ is a crossing outside $\cup_{i} B_{i}$, say $c$.
If $s_{1}$ and $s_{2}$ have 
the same color, we replace $\alpha$
with an arc that connects $s_{1}$ with a strand adjacent to $s_{2}$ at $c$.
By Condition~(5) $c$ is three colored, and by assumption, both its strands correspond to the 
same component of $T$.  Thus we obtain
an arc that connects distinct components and has endpoints of different colors.
Finally, we assume without loss of generality that the crossing information at $s_{1}$ is as shown
in Figure~\ref{fig:makingknot}.  Since $\widehat D$ is connected
and alternating, considering the face containing $\alpha$, we conclude
that the crossing information on $s_{2}$ is as shown in that figure.
We change $\widehat D$ using a $0 \mapsto \pm3$ Montasinos move
(as shown in the bottom of Figure~\ref{fig:montesinos}),
obtaining a diagram fulfilling Conditions~(1)--(6)
that corresponds to a link with fewer components, see Figure~\ref{fig:makingknot}.
\begin{figure}
\includegraphics{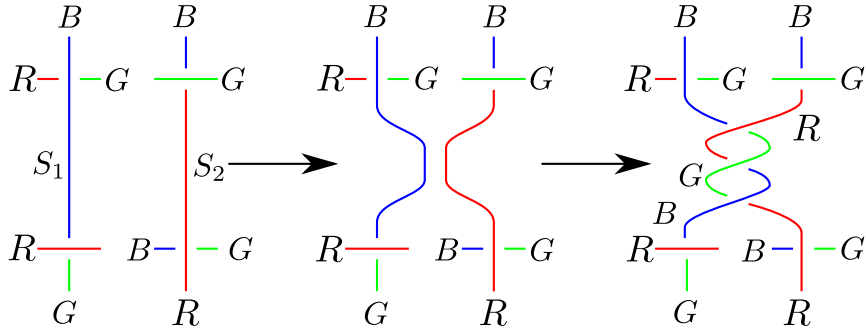}
\caption{Making the brach set into a knot}
\label{fig:makingknot}
\end{figure}
Iterating this process, we obtain a knot, completing the proof of of Lemma~\ref{lem:hyperbolicity}
\end{proof}

We are now ready to complete the proof of Theorem~\ref{thm:dehn}.  
Fix $X$ as in the statement of the theorem and pick a slope on each components of
 $\partial X$, say $p_{i}/q_{i}$ on the torus $T_{i} \subset \partial X$; note that we are using the
 meridian-longitudes to express the slpes as rational numbers (possibly, $1/0$).  
 Construct a 3-fold, simple cover $X^{*} \to S^{3,*}$ as in Lamme~\ref{lem:hyperbolicity}
 that corresponds to the appropriate equivalence classes
 of the slopes  (recall Notation~\ref{notation:EquivalenceClasses}).  For convenience we 
 work with $\widehat D$, the digram of $\widehat T$.

We now change the diagram $\widehat D$ by replacing the rational tangle $\widehat R_{i}$
in $B_{i}$ (that represents the equivalence class of $p_{i}/q_{i}$)  with the rational tangle $R_{i}$ 
(that realizes the slope $p_{i}/q_{i}$), $i=1,\dots,n$.  By construction the four strands of $\widehat D$
that connect to $B_{i}$ are single colored, and we color the $R_{i}$
by the same color.  Thus we obtain a diagram of a three colored link denoted $K$.

We claim that $K$ has the following properties:
	\begin{enumerate}
	\item $K$ is a knot.
	\item $K$ admits an alternating projection.
	\item This projection is non-split.
	\item This projection is strongly prime.
	\end{enumerate}
We prove each claim in order:
\begin{enumerate}
\item Since the tangles $\widehat R_{i}$ and $R_{i}$ are equivalent they connect the same points on $\partial B_{i}$
(Notation~\ref{notation:EquivalenceClasses}).  By Lemma~\ref{lem:hyperbolicity}~(7), $\widehat T$ is a knot.
Hence $K$, which is obtained from $\widehat T$ by replacing $\widehat R_{i}$ by $R_{i}$, is a knot
as well.
\item  By Lemma~\ref{lem:hyperbolicity}~(6), $\widehat D$ is alternating.  By the definition of
the equivalence classes of rational tangles, $K$ (which is obtained by replacing $\widehat R_{i}$ by $R_{i}$)
admits an alternating projection.
\item Let $\gamma \subset S^{2}$ be a simple closed curve disjoint from the diagram for $K$.  If $\gamma$ is 
diagrammatically isotopic (that is, an isotopy through curves that are transverse to the diagram at all times)
to a curve that is disjoint from
$\cup_{i} B_{i}$ then by Lemma~\ref{lem:hyperbolicity}~(2) $\gamma$
bounds a disk disjoint from $\widehat D$; this disk is also disjoint from the diagram of $K$.
If $\gamma$ is diagrammatically isotopic into $B_{i}$, then $\gamma$  
bounds a disk disjoint from the diagram for $K$ since rational
tangles are prime.  Finally, if $\gamma$ is not isotopic into or out of $\cup_{i} B_{i}$, we violate
Condition~(4b) of Lemma~\ref{lem:hyperbolicity}.  Hence the diagram for $K$
is non-split.
\item This is very similar to~(3) and is left to the reader.
\end{enumerate}

By Menasco and Thurston (see Corollary~\ref{cor:hyperbolic}), $K$ is hyperbolic. 

Next we note that the 3-coloring of $K$ defines a $3$-fold cover of $S^{3}$; by construction, the cover
of $S^{3,*}$ is $X^{*}$.  The cover of each rational tangle is disconnected and consists of a solid
torus attached to $T_{i} \subset \partial X$ with slope $p_{i}/q_{i}$, and a ball attached to a component
of $\partial X^{*} \setminus \partial X$.  Thus we obtain $X(p_{1}/q_{1},\dots,p_{n}/q_{n})$ as a simple $3$-fold 
cover of $S^{3}$ branched over $K$.

\medskip 
We now isotope each rational tangle $R_{i}$ to realize its depth, that is, realizing the twist number of each rational 
tangle (recall Subsection~\ref{subsec:twist}).
The twist number of $R_{i}$ is exactly $\depth[p_{i}/q_{i}]$.  The tangle $T$ 
(which is the projection of $K$ outside $\cup_{i} B_{i}$) has a fixed number of twist regions, say $t$.
Hence the total number of twist regions is $t + \sum_{i=1}^{n} \depth[p_{i}/q_{i}] = t + \depth[\alpha]$ 
(where $\alpha=(\alpha_{1},\dots,\alpha_{n})$ denotes the multislope on $\partial X$, as in Section~\ref{sec:intro}).  
This gives an upper bound for the twist number
for $K$: 
$$t(K)\leq t + \depth[\alpha].$$  
Lackenby~\cite{lackenby} (recall Subsection~\ref{subsec:twist}) showed that there exists a constant $c$ so that:
$$\vol[S^{3} \setminus K] \leq c t(K).$$
Hence we get:
\begin{eqnarray*}
\kvsd[3 X(\alpha_{1},\dots,\alpha_{n}]) &\leq & 3 \vol[S^{3} \setminus K] \\
	&\leq& 3c t(K) \\
	&\leq& 3c t + 3c(\depth[\alpha]). \\
\end{eqnarray*}
By setting $A = 3c$ and $B = 3ct$, we obtain constants fulfilling the requirements of Theorem~\ref{thm:dehn} 
that are valid for any multislope $\alpha' = (\alpha_{1}',\dots,\alpha_{n}')$, with $\alpha_{i}'$ in the same
equivalence class as $\alpha_{i}$.  As there are only finitely many (specifically,
$6^{n}$) equivalence classes, taking the maximal constants $A$ and $B$ for these classes 
completes the proof of the theorem.

%\bibliographystyle{plain} % 
%\bibliography{lv3} %
\end{document}